\def\thetitle{Discrepancies of subtrees}
\definecolor{CombinatoricaAqua}{HTML}{00698C}
\definecolor{CombinatoricaBlue}{HTML}{3A3293}
\definecolor{CombinatoricaBrown}{HTML}{66220C}
\definecolor{CombinatoricaRed}{HTML}{DF2A27}
\definecolor{HarvardCrimson}{rgb}{0.6471, 0.1098, 0.1882}
\let\reftagform@=\tagform@
\def\tagform@#1{\maketag@@@
	{(\ignorespaces\textcolor{CombinatoricaBrown}{#1}\unskip\@@italiccorr)}}
\renewcommand{\eqref}[1]{\textup{\reftagform@{\ref{#1}}}}
\Crefname{fact}{Fact}{Facts}
\Crefname{claim}{Claim}{Claims}
\declaretheoremstyle[
spaceabove=\topsep, spacebelow=\topsep,
headfont=\color{CombinatoricaBrown}\normalfont\bfseries,
bodyfont=\itshape,
]{thm}
\declaretheoremstyle[
spaceabove=\topsep, spacebelow=\topsep,
headfont=\color{CombinatoricaBrown}\normalfont\bfseries,
bodyfont=\normalfont,
]{dfn}
\declaretheoremstyle[
spaceabove=0.5\topsep, spacebelow=0.5\topsep,
%headfont=\color{CombinatoricaBrown}\normalfont\itshape,
headfont=\color{CombinatoricaBrown}\normalfont\bfseries,
bodyfont=\normalfont,
]{rmk}
\declaretheorem[style=thm]{theorem}
\declaretheorem[style=thm,sibling=theorem]{lemma}
\declaretheorem[style=thm,sibling=theorem]{corollary}
\declaretheorem[style=thm,sibling=theorem]{claim}
\declaretheorem[style=thm,sibling=theorem]{proposition}
\declaretheorem[style=thm,sibling=theorem]{conjecture}
\renewcommand{\eprint}[1]{\href{https://arxiv.org/abs/#1}{arXiv:#1}}
\renewcommand{\PrintNames@a}[4]{%
	\PrintSeries{\name}
	{#1}
	{}{ and \set@othername}
	{,}{ \set@othername}
	{}{ and \set@othername}
	{#2}{#4}{#3}%
}
\def\mathcolor#1#{\@mathcolor{#1}}
\def\@mathcolor#1#2#3{%
	\protect\leavevmode
	\begingroup
	\color#1{#2}#3%
	\endgroup
}
\definecolor{Red}{rgb}{0.618,0,0}
\definecolor{Blue}{rgb}{0,0,1}
\definecolor{Green}{rgb}{0,0.298,0}
\newcommand{\Sp}{\mathrm{Sp}}
\title{\thetitle}
\author{Tarun Krishna}
\author{Peleg Michaeli\thanks{Research partially supported by NSF grant DMS1952285.}}
\author{Michail Sarantis\thanks{Research partially supported by the Onassis Foundation -- Scholarship F ZP 051-1/2019-2020.}}
\author{Fenglin Wang}
\author{Yiqing Wang}
\affil{
  Department of Mathematical Sciences\\
  Carnegie Mellon University\\
  Pittsburgh PA 15213
}
\def\namedlabel#1#2{\begingroup
  #2%
  \def\@currentlabel{#2}%
  \phantomsection\label{#1}\endgroup
}
\newcommand{\defn}[1]{{\bfseries #1}}
\renewcommand{\phi}{\varphi}
\newcommand{\CC}{\mathbb{C}}
\newcommand{\RR}{\mathbb{R}}
\renewcommand{\SS}{\mathbb{S}}
\newcommand{\cX}{\mathcal{X}}
\newcommand{\cS}{\mathcal{S}}
\newcommand{\cT}{\mathcal{T}}
\renewcommand{\b}{\vect{b}}
\renewcommand{\d}{\vect{d}}
\newcommand{\m}{\vect{m}}
\newcommand{\n}{\vect{n}}
\newcommand{\ceil}[1]{\left\lceil{#1}\right\rceil}
\newcommand{\mon}{\mathfrak{i}}
\DeclareMathOperator{\Beta}{B}
\newcommand{\sm}{\smallsetminus}
\newcommand{\vect}{\mathbf}
\newcommand{\E}[0]{\mathbb{E}}
\newcommand{\Dist}[1]{\mathsf{#1}}
\newcommand{\Unif}{\Dist{Unif}}
\newcommand{\cH}{\mathcal{H}}
\newcommand{\cV}{\mathcal{V}}
\newcommand{\cE}{\mathcal{E}}
\newcommand{\D}{\mathcal{D}}
\newcommand{\lD}{\D^{\circ}}
\newcommand{\sD}{\dot\D}
\newcommand{\oD}{\vec\D}
\begin{document}
\maketitle

%%%%%%%%%%%%%%%%%%%%%%%%%%%%%%%%%%%%%%%%%%%%%%%%%%%%%%%%%%%%%%%%%%%%%%%%%%%%%%%
% Abstract
\begin{abstract}
  We study multicolour, oriented and high-dimensional discrepancies of the set of all subtrees of a tree.
  As our main result,
  we show that the $r$-colour discrepancy of the subtrees
  of any tree is a linear function of the number of leaves $\ell$ of that tree.
  More concretely, we show that it is bounded by $\ceil{(r-1)\ell/r}$ from below
  and $\ceil{(r-1)\ell/2}$ from above,
  and that these bounds are asymptotically sharp.
  Motivated by this result,
  we introduce natural notions of oriented and high-dimensional discrepancies
  and prove bounds for the corresponding discrepancies of the set of all subtrees of a given tree
  as functions of its number of leaves.
\end{abstract}

\section{Introduction}
Given a hypergraph $\cH=(\cV,\cE)$,
a (two-)colouring of (the vertices of) $\cH$
is a function $f:\cV\to\{\pm 1\}$.
For a hyperedge $A$ we set $f(A)=\sum_{a\in A} f(a)$,
and $|f(A)|$ is called the \defn{imbalance} of $A$.
The (combinatorial) \defn{discrepancy} of $\cH$ is defined to be
\[
  \D(\cH) = \min_{f:\cV\to\{\pm 1\}} \max_{A\in\cE} |f(A)|.
\]
Namely, the discrepancy of $\cH$ is the maximum imbalance of an edge
under an optimal colouring.
It is often convenient to think about this definition in terms of a game:
an adversary colours $\cV$ using $2$ colours.
He tries to do it as balanced as possible, that is,
so that the distribution of the colours in every member of $\cE$ will be as close as possible to uniform.
Our goal is then to find a member of $\cE$ of maximum imbalance.
Over the last century,
the study of discrepancy-type problems has developed into a field
with extensive range and variety,
demonstrating strong ties to number theory, Ramsey theory, and computational methods.
We refer the reader to the book of Matou\v{s}ek~\cite{Mat}
for a comprehensive overview of the topic. 

There are several natural ways to generalise the above definition of ($2$-colour) discrepancy
to an arbitrary number of colours.
One such generalisation was introduced by Doerr and Srivastav~\cite{DS03},
in which the notion of imbalance captures the maximum deviation of the size of a colour class
from the mean size of a colour class
(or, in other words, the (scaled) $\ell^\infty$-distance of the colour distribution from the uniform distribution).
We call it here the \defn{symmetric $r$-colour discrepancy} of $\cH$, and denote\footnote{%
The original definition of Doerr and Srivastava was a $(1/r)$-scaling of the above definition;
we scaled it for convenience to allow $\lD_2=\D$, and to ensure it is an integer.}
\[
  \lD_r(\cH) = \min_{f:\cV\to[r]} \max_{A\in\cE} \max_{j\in[r]}
  \left|r\left|f^{-1}(j)\cap A\right|-|A|\right|.
\]
Recently, mostly in the context of graphs,
a slightly different notion of multicolour discrepancy was studied,
in which the notion of imbalance captures the deviation of the size of the {\em largest} colour class
from the mean size.
We call it here the (upper) \defn{$r$-colour discrepancy} of $\cH$, and denote
\[
  \D_r(\cH) = \min_{f:\cV\to[r]} \max_{A\in\cE} \max_{j\in[r]}
  \left(r\left|f^{-1}(j)\cap A\right|-|A|\right).
\]
It is not hard to see
that these definitions are both generalisations of the classical notion of discrepancy,
and differ from each other by a constant factor.
Concretely,
$\D_2(\cH) = \lD_2(\cH) = \D(\cH)$ and
\begin{equation}\label{eq:mclr}
  \D_r(\cH) \le \lD_r(\cH) \le (r-1)\D_r(\cH)
\end{equation}
for every hypergraph $\cH$ and $r\ge 2$.

The ``upper'' variation is more natural in the context of edge-colourings in (hyper)graphs,
due to its direct relation to Ramsey-type questions:
given an edge-colouring of a graph,
instead of looking for a monochromatic copy of a target subgraph,
one looks for a copy of that subgraph in which one of the colours appears (significantly) more than the average.
In that sense, discrepancy-type problems may be considered as a relaxation --- or rather a quantification ---
of Ramsey-type problems.

Let us elaborate on combinatorial discrepancies in the context of graphs.
Here,
given a base graph $G$ and a family of graphs $\cX$,
we construct a hypergraph whose vertices are the edges of $G$
and whose hyperedges are edge sets that from a member of $\cX$.
The \defn{discrepancy of $\cX$ in $G$}, denoted $\D(G,\cX)$,
is the discrepancy of that hypergraph.
Analogously, we define the
symmetric $r$-colour discrepancy of $\cX$ in $G$ ($\lD_r(G,\cX)$)
and
the $r$-colour discrepancy of $\cX$ in $G$ ($\D_r(G,\cX)$).
It is helpful to keep in mind that $\D(\cH)$ is monotone in $\cE$,
hence $\D(G,\cX)$ is monotone both in $G$ and in $\cX$.
It is therefore natural (and often nontrivial) to study $\D(K_n,\cX)$.

The study of combinatorial discrepancy in graphs
was initiated by Erd\H{o}s, F\"uredi, Loebl and S\'os~\cite{EFLS95},
who analysed the $2$-colour discrepancy of a fixed spanning tree with a given maximum degree in the complete graph.
However, several earlier results can be stated using this terminology.
As an example we mention the result of
Erd\H{o}s and Spencer~\cite{ES72},
that can be interpreted as showing that the ($2$-colour) discrepancy of cliques in the complete graph on $n$ vertices
(or, more generally, of hypercliques in the complete $k$-uniform hypergraph) is of order $n^{3/2}$
(or, more generally, $n^{(k+1)/2}$).
Recently, Balogh, Csaba, Jing and Pluh\'ar~\cite{BCJP20} initiated the study of discrepancies in general graphs.
In particular, they obtained a Dirac-type bound for positive discrepancy of Hamilton cycles
(in $2$ colours; this was generalised to $r$ colours in~\cite{FHLT21} and independently in~\cite{GKM22b}),
and estimated the discrepancy of the set of all spanning trees in random regular graphs
and $2$-dimensional grids (in $2$ colours).
The last result was greatly generalised to $r$ colours and to almost every base graph in~\cite{GKM22b},
where the authors establish a non-trivial connection between the spanning-tree discrepancy
(essentially an extremal quantity)
and a purely geometric property of the graph.
In $2$-dimensional grids,
Balogh et al.\ also showed that the discrepancy of paths (hence also of trees) is linear in the number of vertices.

Other recent works include
an estimate of multicolour discrepancy in random graphs and in the complete graph~\cite{GKM22a};
a Dirac-type bound for positive $2$-colour discrepancy of $k$-factors~\cite{BCPT21};
and a Dirac-type bound for positive $2$-colour discrepancy of powers of Hamilton cycles~\cite{Bra22}.
Finally, Gishboliner, Krivelevich and the second author have introduced a notion of {\em oriented} discrepancy,
and studied the oriented discrepancy of Hamilton cycles in dense and in random graphs~\cite{GKM22+}.
We will elaborate on this matter further.

The present work continues this line of research.
Our main result shows that the $r$-colour discrepancy of the set of all trees in a given tree
is linear in the number of leaves of that tree.
Let us denote the set of all trees by $\cT$.
Thus, for a graph $G$, $\D_r(G, \cT)$ denotes the $r$-colour discrepancy of trees in $G$.
For a tree $T$, denote by $\ell(T)$ the number of leaves in $T$.
As a warm-up example, consider the following two simple cases.
Let $S_\ell$ denote the star with $\ell$ leaves.
It is evident that an optimal colouring is an equipartition of the leaves into the $r$ colour classes,
and the most unbalanced tree in this case will be a monochromatic substar.
Hence,
\begin{equation}\label{eq:disc:stars}
  \D_r(S_\ell,\cT)
  = r\cdot\ceil{\frac{\ell}{r}} - \ceil{\frac{\ell}{r}}
  =(r-1)\ceil{\frac{\ell}{r}}.
\end{equation}
Similarly, considering the path $P_n$ on $n$ vertices
(so $\ell(P_n)=2$),
an optimal colouring can easily be seen to be any periodic colouring,
in which the most unbalanced tree will be a single edge.
Hence,
\begin{equation}\label{eq:disc:paths}
  \D_r(P_n,\cT)
  = r\cdot 1 - 1
  =(r-1)\ceil{\frac{2}{r}}.
\end{equation}

Given \cref{eq:disc:stars,eq:disc:paths}, a natural guess would be that any tree $T$ satisfies
$\D_r(T,\cT)=(r-1)\ceil{\ell(T)/r}$.
It turns out that the above holds for $r=2$ (see below).
For $r\ge 3$, however, this is only (at least asymptotically) a lower bound,
and the star demonstrates that it is sharp.
Our first and main result gives bounds on $\D_r(T,\cT)$ in terms of $\ell(T)$.
\begin{theorem}[Multicolour discrepancy]\label{thm:mclr}
  For every $r\ge 2$ and every tree $T$ with $\ell$ leaves,
  \[
    \ceil{(r-1)\cdot\frac{\ell}{r}} \le \D_r(T,\cT) \le \ceil{(r-1)\cdot\frac{\ell}{2}}.
  \]
  In particular, for $r=2$ we have
\[
  \D_2(T,\cT) = \ceil{\frac{\ell}{2}}.
\]
\end{theorem}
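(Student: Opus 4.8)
The plan is to prove the two inequalities separately and to read off the case $r=2$ for free, since there both bounds equal $\ceil{\ell/2}$ and the sandwich forces equality. Throughout I fix a colouring $f\colon E(T)\to[r]$ and, for a colour $j$, record the \emph{$j$-excess} of a subtree $A$ as $w_j(A):=\sum_{e\in A}w_j(e)$, where $w_j(e)=r-1$ if $f(e)=j$ and $w_j(e)=-1$ otherwise; this equals $r\,|f^{-1}(j)\cap A|-|A|$. Writing $M_j:=\max_A w_j(A)$ for the maximum $j$-excess over subtrees $A$ (a maximum-weight connected-subgraph problem on a tree), the inner optimisation in $\D_r(T,\cT)$ becomes $\max_j M_j$, and the theorem asserts that $\min_f\max_j M_j$ lies between the two displayed quantities.

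For the lower bound I want $\max_j M_j\ge\ceil{(r-1)\ell/r}$ for every $f$. The cleanest route is to prove the averaged statement $\sum_{j}M_j\ge(r-1)\ell$ and then pass to the maximum via $\max_j M_j\ge\frac1r\sum_j M_j$, using integrality of the $M_j$ to upgrade $(r-1)\ell/r$ to its ceiling. This sum is the natural object because each edge contributes $+(r-1)$ to its own colour and $-1$ to each of the other $r-1$, so the weights are balanced edge by edge. The star is instructive: a single monochromatic substar at the centre already gives $M_j=(r-1)d_j$ with $\sum_j d_j=\ell$, so $\sum_j M_j\ge(r-1)\ell$ on the nose. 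In general $\sum_j M_j\ge(r-1)\deg(v)$ for every vertex $v$ (monochromatic substars), which settles high-degree trees but is too weak when $\Delta(T)<\ell$; the tree on two adjacent degree-three vertices shows one is then forced to combine same-coloured edges sitting at \emph{different} branch vertices into one subtree. I therefore expect the main work to be an aggregation argument --- growing, for a well-chosen colour, a connected subtree collecting the (near-)pendant edges of that colour --- where the only real obstacle is controlling the ``Steiner cost'' of the connecting edges; the point is that an edge spent as a connector for one colour is an edge of another colour, and is thereby paid for in that colour's budget, which is exactly why the \emph{sum} over colours, rather than any single colour, comes out to $(r-1)\ell$. (The degenerate single edge, where the two leaves share one edge, is a trivial separate case.)

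For the upper bound I must exhibit a colouring with $\max_j M_j\le\ceil{(r-1)\ell/2}$. The two extreme instances dictate the design: on a path one wants a periodic colouring, so that every subpath has $j$-excess at most $r-1$ and a subpath spanning whole periods has excess nearly zero; at a high-degree vertex one instead wants to equidistribute the incident edges among the colours, so that monochromatic substars stay small. A naive interpolation such as a DFS colour-rotation already fails on the tree formed by two adjacent degree-three vertices, so the two mechanisms must be combined with care. The plan is to pass to the topological tree obtained by suppressing degree-two vertices --- whose edges are internally disjoint paths of $T$ and whose leaves are exactly those of $T$ --- to colour each such path periodically, and to resolve the branch structure by pairing the leaves two at a time and colouring the pendant paths of a pair in opposition. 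Because a connected subtree meets each path of the decomposition in a subpath (intersections of subtrees are subtrees), its $j$-excess splits as a sum of path-contributions that are nonzero only near the ``ends'' of the subtree; the pairing is then the device that bounds the number of contributing ends by roughly $\ell/2$, turning the crude per-leaf estimate $(r-1)\ell$ into $(r-1)\ell/2$. I expect the genuine difficulty to be making this localisation precise --- reconciling the periodic colourings on long paths with the pairing at branch vertices so the total never exceeds $\ceil{(r-1)\ell/2}$ --- and in particular handling trees such as spiders, which have no direct cherry and force the pairing to act on subdivided pendant paths.

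Finally, for $r=2$ the two bounds coincide at $\ceil{\ell/2}$, so equality is immediate from the sandwich; here both halves simplify, the lower bound reducing to $M_{+}+M_{-}\ge\ell$ for the $\pm1$ weighting and the upper bound to the pairing colouring with imbalance at most $\ceil{\ell/2}$.
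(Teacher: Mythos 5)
Your overall architecture (prove the two bounds separately, read off $r=2$ from the sandwich) is the paper's, and your lower-bound plan is essentially the paper's proof --- but you stop just short of executing it. The averaging statement $\sum_j M_j\ge(r-1)\ell$ is exactly what the paper establishes, and the ``Steiner cost'' obstacle you flag dissolves once you fix the right subtrees: for \emph{every} colour $j$ simultaneously, let $T'$ be the tree obtained from $T$ by deleting all leaves, and let $T_j$ be $T'$ together with the colour-$j$ leaf edges. Then every interior edge lies in all $r$ of these subtrees and contributes $(r-1)+(r-1)(-1)=0$ to the sum, while each leaf edge lies only in its own colour's subtree and contributes $r-1$; hence $\sum_j w_j(T_j)=(r-1)\ell$ exactly, and pigeonhole plus integrality gives $\ceil{(r-1)\ell/r}$. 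Your remark that a connector edge ``is paid for in that colour's budget'' is precisely this accounting, but it only closes once all $r$ subtrees share the \emph{entire} interior $T'$; no growing or aggregation argument is needed, and without naming this choice the lower bound is not yet proved.

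The upper bound is where the genuine gap lies. Your static scheme --- suppress degree-two vertices, colour each path periodically, pair the leaves and colour paired pendant paths ``in opposition'' --- is underdetermined (``in opposition'' has no meaning for $r\ge3$) and fails in its naive instantiation even for $r=2$: take a caterpillar whose spine carries $\ell$ pendant edges, colour the spine alternately and the pendants half red, half blue as your pairing suggests; the subtree consisting of the whole spine plus all red pendants has imbalance $\ell/2+1$ whenever the spine's own excess is $+1$ in red, exceeding $\ceil{\ell/2}$. The point is that the pendant colours must be \emph{coordinated with} the colouring already present at their attachment vertices, and this coordination is inherently adaptive, not a fixed pairing. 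The paper does this by induction on $\ell$: remove a pendant path $P_u$ at a branching vertex $b$, colour the remaining tree $T'$ inductively, and track at every vertex the \defn{colour profile} $\chi(v;T')=(M_1(v;T'),\dots,M_r(v;T'))$ of maximal excesses over subtrees through $v$; the new path is then coloured periodically starting from the \emph{least popular} colour in $\chi(b;T')$ and cycling upward. A domination argument ($\chi(v;T)\preceq\d_\ell$ for explicit vectors $\d_\ell$ with $\max\d_\ell=\ceil{(r-1)\ell/2}$ and $\max\d_\ell=\max\d_{\ell-2}+r-1$) shows each \emph{pair} of leaves costs $r-1$ --- which is your pairing intuition, but realised sequentially through the profiles rather than by a static matching of leaves. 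Without some substitute for this bookkeeping, your localisation claim (``excess only near the ends, at most $r-1$ per pair'') is unproven, and as the caterpillar shows, false for the colourings your plan actually specifies.
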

We explained earlier why the lower bound in \cref{thm:mclr} is sharp
(asymptotically and for infinitely many values of $\ell$).
In \cref{sec:mclr}, where we prove the theorem,
we also prove that the upper bound is sharp
(exactly and for every $\ell$; see \cref{prop:mclr:ub:sharp}).

Using a classical result of Kleitman and West~\cite{KW91}
about the maximum number of leaves in a spanning tree of a graph
(sometimes called the \defn{maximum leaf number}),
we obtain the following
improvement\footnote{%
Their result, for $r=2$ only, is an immediate corollary of a stronger result they prove
on the discrepancy of {\em paths} in the grid.
On the other hand,
while their proof is a clever ad-hoc and suited for grids,
our proof is more general.%
} and extension (to any number of colours)
of \cite{BCJP20}*{Corollary~7}.
For a proof, see \cref{sec:grid}.
\begin{corollary}\label{cor:mclr:grid}
  Let $m,n\ge 2$ be integers
  and let $G$ be the $m\times n$ grid.
  Then
  \[\D_r(G,\cT)\ge \frac{r-1}{4r}\cdot mn+1-2r.\]
\end{corollary}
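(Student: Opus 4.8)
The plan is to reduce the statement to a lower bound on the maximum number of leaves of a spanning tree of the grid, and then to feed that into \cref{thm:mclr}. The key observation is that $\D_r(G,\cT)$ is monotone in the base graph: if $T\subseteq G$ is any spanning tree, then every subtree of $T$ is also a subtree of $G$, so restricting an optimal $r$-colouring of $E(G)$ to $E(T)$ gives $\D_r(G,\cT)\ge\D_r(T,\cT)$. Combining this with \cref{thm:mclr} applied to $T$ yields $\D_r(G,\cT)\ge\ceil{(r-1)\ell(T)/r}$ for every spanning tree $T$ of $G$. Hence it suffices to produce a spanning tree of the grid with about $mn/4$ leaves.

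To find such a tree I would appeal to the theorem of Kleitman and West: a connected graph on $N$ vertices with minimum degree at least $3$ has a spanning tree with at least $N/4+2$ leaves. The grid does not satisfy the hypothesis, as its four corners have degree $2$ (every other vertex has degree $3$ or $4$), and removing this defect is the main point of the argument. Assuming $m,n\ge 3$ (so that no two corners are adjacent and no corner is a neighbour of another corner), I would form an auxiliary graph $G'$ by contracting, for each corner, the edge joining it to one of its neighbours. Since the grid is triangle-free these contractions produce no multiple edges, and a short case check on the affected vertices shows that $G'$ is a simple connected graph on $mn-4$ vertices with $\delta(G')\ge 3$.

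Kleitman--West now gives a spanning tree $T'$ of $G'$ with at least $(mn-4)/4+2=mn/4+1$ leaves, and undoing the four contractions converts $T'$ into a spanning tree $T$ of $G$: expanding a contracted edge simply re-attaches the corresponding corner as a pendant, which can always be arranged not to decrease the leaf count (the corner becomes a new leaf, in the worst case merely replacing the vertex it was merged into). Thus $\ell(T)\ge mn/4$, and so $\D_r(G,\cT)\ge\ceil{(r-1)\ell(T)/r}\ge(r-1)mn/(4r)$, which already implies the claimed inequality with room to spare; the generous additive term $1-2r$ absorbs both the corner bookkeeping and the degenerate small cases $m=2$ or $n=2$, where the corners are adjacent but the asserted bound is non-positive. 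The only genuine obstacle is the minimum-degree-$2$ defect at the corners together with the check that the contraction/expansion loses no leaves; everything else is immediate from monotonicity and \cref{thm:mclr}, so no delicate optimisation of constants is required.
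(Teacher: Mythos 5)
Your main case ($m,n\ge 3$) is sound, and it takes a genuinely different route from the paper. The paper repairs the four degree-$2$ corners by \emph{adding} a perfect matching on them, obtaining $G^+\supseteq G$ with $\delta(G^+)=3$, applies Kleitman--West to $G^+$, and then must transfer the discrepancy bound from $G^+$ back to $G$ at an additive cost (this is the step behind the term $1-2r$). You instead repair the corners by \emph{contracting} an edge at each of them, apply Kleitman--West to the contracted graph, and lift the many-leaved spanning tree back to a spanning tree of $G$ itself, so that your final step is pure monotonicity ($T\subseteq G$ gives $\D_r(G,\cT)\ge\D_r(T,\cT)$) with no transfer between ambient graphs. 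That is a real structural difference, and in the range $m,n\ge 3$ your argument is clean and even avoids the bookkeeping the paper needs.

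However, there is a genuine gap: your dismissal of the cases $m=2$ or $n=2$ is based on a false claim. You assert that there ``the asserted bound is non-positive,'' but for $m=2$ the claimed bound is $\frac{r-1}{2r}\,n+1-2r$, which is positive as soon as $n>\frac{2r(2r-1)}{r-1}$; for instance, for the $2\times 17$ grid and $r=2$ it equals $17/4-3=1.25$, so the corollary asserts $\D_2(G,\cT)\ge 2$, which is not implied by anything trivial (a single edge only gives $\D_r\ge r-1$, i.e.\ $\ge 1$ here). And precisely in these cases your construction breaks: two corners are adjacent, and contracting an edge at each corner either leaves a vertex of degree $2$ (contract the corner--corner edge $\{(1,1),(2,1)\}$ and the merged vertex has only two neighbours) or creates parallel edges (contract $\{(1,1),(1,2)\}$ and $\{(2,1),(2,2)\}$ and the two merged vertices are joined by the two former edges $\{(1,1),(2,1)\}$ and $\{(1,2),(2,2)\}$), so Kleitman--West cannot be applied. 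The paper's matching trick does not suffer from this, since one may match each corner to a \emph{non-adjacent} corner for every $m,n\ge 2$. Your gap is easy to fill --- in the $2\times n$ grid the ``comb'' spanning tree (the top-row path with every bottom vertex attached as a pendant) has $n=mn/2$ leaves, which via \cref{thm:mclr} beats the claimed bound --- but as written your proof fails to cover infinitely many cases in which the statement has genuine content.
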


Our next result is in the context of signed/oriented discrepancy.
Let us lay a formal ground to state our results.
The notion of a \defn{signed hypergraph}, introduced by Shi~\cite{Shi92},
is an extension of the conventional notion of a hypergraph
that allows ``negative'' vertex-edge incidences.
Formally, a signed hypergraph $\cH$ is a triple $(\cV,\cE,\psi)$
where $\cV,\cE$ are disjoint sets (``vertices'' and ``hyperedges'')
and $\psi:\cV\times\cE\to\{-1,0,1\}$ is an \defn{incidence function}.
For a hyperedge $A$ we set $f(A)=\sum_{a\in\cV}f(a)\cdot\psi(a,A)$,
and $|f(A)|$ is called the \defn{imbalance} of $A$.
We define the \defn{signed discrepancy} of $\cH$ to be
\[
  \sD(\cH) = \min_{f:\cV\to\{\pm 1\}} \max_{A\in\cE} \left|\sum_{a\in\cV} f(a)\cdot \psi(a,A)\right|.
\]
With a slight abuse, we may ignore the formal definition that contains the incidence function,
and instead think of sets in a more general way:
for each set and each element, the set can contain the element,
not contain the element, or ``negatively'' contain that element.
This notion turns out to be useful in many cases, as we will see below.
Note that $\sD(\cH)=\D_2(\cH)$ if $\psi$ is nonnegative;
in that sense, the signed discrepancy is a direct generalisation of $2$-colour discrepancy.

Analogously to how we defined multicolour discrepancies in graphs,
we define oriented discrepancy in graphs.
In this setting, given an oriented\footnote{That base orientation will not matter and can be arbitrary.}
base graph $G$
and a family of oriented graphs $\cX$,
we construct a signed hypergraph whose vertices are the edges of $G$
and whose hyperedges are edge sets that from a member of $\cX$,
where an edge is positively contained in a hyperedge if its orientation in $G$ agrees with its orientation in $\cX$,
and negatively contained otherwise.
The \defn{oriented discrepancy of $\cX$ in $G$},
denoted $\oD(G,\cX)$,
is the signed discrepancy of that signed hypergraph\footnote{%
A potential term would have been \defn{signed discrepancy};
however, when the vertices of the hypergraph represent edges of a graph,
the notion of orientation is more natural.}.
Again, it is convenient to think about this definition in terms of a game:
an adversary orients the edges of $G$ (ignoring the ``original'' orientation it had).
He tries to do it as balanced as possible, that is,
so that in any member of $\cX$,
the number of edges in which the orientation in $\cX$ agrees with his orientation of $G$
is as close to 50\% as possible.
Our goal is then to find a member of $\cX$ of maximum imbalance,
namely, that contains many more agreements than disagreements, or the other way around.

Let $\mathcal{DHAM}$ be the set of all {\em directed} Hamilton cycles.
The result of \cite{GKM22+} on the oriented discrepancy of Hamilton cycles in Dirac graphs
can be restated as follows:
if $G$ is an $n$-vertex graph with $\delta(G)\ge n/2+8$
then $\oD(G,\mathcal{DHAM})=\Omega(2\delta(G)-n))$.
The authors of \cite{GKM22+} conjectured that if $\delta(G)\ge n/2$
then $\oD(G,\mathcal{DHAM})\ge 2\delta(G)-n$,
and that if true, it would be best possible.
The conjecture --- a strong generalisation of Dirac's theorem
--- was fully resolved by Freschi and Lo~\cite{FL23+}:
\begin{theorem}[\cite{FL23+}*{Theorem 1.5}]
  Let $G$ be an $n$-vertex graph with $n\ge 3$ $\delta(G)\ge n/2$.
  Then, $\oD(G,\mathcal{DHAM})\ge 2\delta(G)-n$.
\end{theorem}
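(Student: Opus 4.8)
The plan is to recast the theorem as a statement about producing a single, well-chosen directed Hamilton cycle. Fix any orientation $\vec{G}$ of the edges of $G$; this plays the role of the adversary. Traversing an undirected Hamilton cycle of $G$ in a chosen direction, call an edge \emph{forward} if the traversal agrees with $\vec{G}$ and \emph{backward} otherwise, and let $f$ and $b=n-f$ be the numbers of forward and backward edges, so that the imbalance of the corresponding directed cycle is $|f-b|=|2f-n|$. Since a directed Hamilton cycle and its reverse both lie in $\mathcal{DHAM}$ and have opposite imbalance, it suffices, for \emph{every} orientation $\vec{G}$, to produce an undirected Hamilton cycle that, traversed in a suitable direction, has $f\ge\delta(G)$ forward edges: then the imbalance is $2f-n\ge 2\delta(G)-n$, and taking the minimum over adversaries gives $\oD(G,\mathcal{DHAM})\ge 2\delta(G)-n$.

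Next I would harvest forward edges in bulk via a classical path-cover bound. The key observation is that $\alpha(G)\le n-\delta(G)$: every vertex of an independent set $I$ sends all of its $\ge\delta(G)$ neighbours into $V(G)\setminus I$, so $n-|I|\ge\delta(G)$. Applying the Gallai--Milgram theorem to $\vec{G}$ then yields a cover of $V(G)$ by at most $\alpha(\vec{G})=\alpha(G)\le n-\delta(G)$ vertex-disjoint directed paths. Writing $k\le n-\delta(G)$ for the number of paths, these paths use exactly $n-k\ge\delta(G)$ arcs of $\vec{G}$, all forward; the only edges not yet accounted for are the $k$ links needed to join the paths into one cycle.

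The heart of the proof is therefore the \emph{connection step}: arrange the $k$ directed paths $P_1,\dots,P_k$ in a cyclic order and join the tail of each to the head of the next by an edge of $G$, \emph{without} reversing any path (reversing a path would turn its internal arcs backward). If this succeeds, the resulting Hamilton cycle, read so that each $P_i$ runs from head to tail, has at least $n-k\ge\delta(G)$ forward edges, as required; note that the orientations of the $k$ connecting edges are irrelevant, since they are not counted. The natural tool is a rotation--extension (P\'osa-type) argument inside $G$: maintain a partial cyclic concatenation of the segments and use the density $\delta(G)\ge n/2$ to reroute whenever a tail fails to see an available head directly, absorbing the offending segment by rerouting through an internal vertex of an already-placed path.

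I expect the connection step to be the main obstacle, for two related reasons. First, the minimum-degree hypothesis controls degrees in $G$ globally but says nothing directly about adjacencies \emph{between path tails and path heads}, so a tail may be adjacent only to internal vertices of other segments and a naive greedy joining can stall; this is exactly where rotations must be invoked, and where one must take care not to manufacture new backward arcs. Second, the bound $2\delta(G)-n$ is tight --- for instance for the transitively oriented $K_n$, where $k=1$ and a single backward edge is unavoidable --- so in the regime $k=n-\delta(G)$ there is \emph{no slack}: every one of the $n-k$ path arcs must survive as forward. Handling this slackless regime will require a stability analysis, showing that either $(G,\vec{G})$ is far from the extremal configuration, so that the rotations have room to operate, or else it is close to an extremal example, in which case the bound $2\delta(G)-n$ can be verified directly.
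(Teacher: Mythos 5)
First, a point of order: this statement is not proved in the paper at all --- it is quoted from Freschi and Lo~\cite{FL23+} as background, so there is no internal proof to compare against; your proposal must therefore be judged on its own merits as an attempt at their theorem.

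On those merits, the proposal has a genuine gap, and it sits exactly where you say it does. Your first two steps are fine: reducing to ``for every orientation there is a Hamilton cycle with at least $\delta(G)$ forward edges'' is correct, and Gallai--Milgram applied to the orientation does give a cover by $k\le\alpha(G)\le n-\delta(G)$ vertex-disjoint directed paths whose $n-k$ arcs are all forward. But the connection step is not a technical loose end --- it is the entire content of the theorem, and the tool you propose for it is structurally unsuited to the task. P\'osa-type rotations work by breaking an edge of the current path system and rerouting; here every broken edge is a forward arc, and worse, a rotation reverses the traversal direction of an entire segment, turning \emph{all} of its arcs from forward to backward at once. So each rotation can cost $\Theta(n)$ forward edges, not one. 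In the tight regime $k=n-\delta(G)$ (which includes natural cases such as orientations of graphs with $\delta(G)=n/2$ and large independent sets, not only the transitive $K_n$), the count $f\ge n-k$ holds with equality required, so not a single path arc may be sacrificed --- i.e., the paths must be joined head-to-tail using only edges of $G$ present between the prescribed endpoints, with no rerouting at all. The minimum-degree hypothesis gives no direct control over adjacencies between the $k$ heads and the $k$ tails, and you give no mechanism to obtain it; the deferred ``stability analysis'' is precisely the missing proof, not a routine completion. (For comparison, Freschi and Lo's actual argument does not pass through Gallai--Milgram path covers; it requires a substantially more delicate analysis, including extremal case distinctions, to achieve the exact bound $2\delta(G)-n$.) As written, the proposal is a correct reduction plus an unproven --- and, via rotations, likely unprovable --- main lemma.
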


Here, we obtain a new result in the setting of oriented discrepancy in graphs.
Let $\mathcal{DT}$ denote the set of all directed rooted trees;
namely, trees that have a distinguished vertex called the \defn{root}
and that are oriented {\em away} from that root%
\footnote{This is an arbitrary choice of one of two natural orientations of a rooted tree,
and has no implications on the results.}.
Our next theorem gives bounds on $\oD(T,\mathcal{DT})$ in terms of $\ell(T)$.
\begin{theorem}[Oriented discrepancy]\label{thm:orient}
  For every tree $T$ on at least $3$ vertices and with $\ell$ leaves,
  \[
    \ceil{\frac{\ell}{2}}+1 \le \oD(T,\mathcal{DT}) \le \ell.
  \]
\end{theorem}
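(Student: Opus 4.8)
The plan is to recast the oriented-discrepancy game in a cleaner combinatorial form and then handle the two bounds separately. An orientation $\phi$ of $T$ is the same datum as a height function $H\colon V(T)\to\mathbb{Z}$, defined up to an additive constant by the rule $H(y)-H(x)=+1$ whenever $\phi$ orients the edge $\{x,y\}$ from $x$ to $y$; this is well defined because a tree has unique paths, and conversely any $H$ whose adjacent values differ by exactly $1$ encodes an orientation. For a root $\rho$ and a subtree $S\ni\rho$ (with $E(S)$ oriented away from $\rho$), an edge agrees with $\phi$ exactly when $\phi$ points away from $\rho$ along it, so the imbalance is $\bigl|\sum_{e\in E(S)}\epsilon_\rho(e)\bigr|$, where $\epsilon_\rho(e)\in\{\pm1\}$ is the agreement sign. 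The first step I would prove is that \emph{rooting costs nothing}: for a fixed $\phi$, the largest imbalance over all pairs $(\rho,S)$ equals $\max_\rho$ of the ordinary $2$-colour subtree discrepancy of the edge-colouring $\epsilon_\rho$. Indeed, an optimal unrooted subtree $S'$ for $\epsilon_\rho$ may be re-rooted at the vertex $w\in S'$ nearest $\rho$ without altering any edge-sign inside $S'$, since $\epsilon_\rho$ and $\epsilon_w$ differ only along the $\rho$–$w$ path, which lies outside $S'$.

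For the upper bound I would use the \emph{bipartite orientation}: split $V(T)$ into its two colour classes $V_0,V_1$ and orient every edge from its $V_0$-endpoint to its $V_1$-endpoint (so $H\in\{0,1\}$). The key simplification is that now $\epsilon_\rho$ on the parent-edge of a vertex $v$ equals $+1$ iff $v\in V_1$, \emph{independently} of $\rho$; hence the imbalance of $(S,\rho)$ is exactly $\bigl|n_1(S)-n_0(S)\pm1\bigr|$, where $n_i(S)=|S\cap V_i|$ and the sign is opposite to the class of $\rho$. It then suffices to show $\max_S\bigl|n_1(S)-n_0(S)\bigr|\le\ell-1$, which I would establish in two steps: (i) every subtree $S\subseteq T$ has at most $\ell$ leaves, since each leaf of $S$ that is internal in $T$ injects into a distinct leaf of $T$ through the pairwise-disjoint components of $T\setminus S$; and (ii) for any tree $S$, $\bigl|n_1(S)-n_0(S)\bigr|\le(\text{number of leaves of }S)-1$, by an induction that strips a monochromatic pendant leaf or a bichromatic pendant path, with the base case (all leaves in one class) following from the injection of the majority class into the minority by sending each internal vertex to a distinct child. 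Together these give imbalance $\le\ell$, hence $\oD(T,\mathcal{DT})\le\ell$; the star shows this orientation is tight.

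For the lower bound, the reduction in the first step immediately yields $\oD(T,\mathcal{DT})\ge\D_2(T,\cT)=\ceil{\ell/2}$, because every colouring $\epsilon_\rho$ has subtree discrepancy at least $\D_2(T,\cT)$, which equals $\ceil{\ell/2}$ by \cref{thm:mclr}. The remaining task—extracting the extra $+1$—is the main obstacle. My plan is to split the $\ell$ pendant edges according to whether $\phi$ points into or out of the leaf, say $a$ inward and $b=\ell-a$ outward with $a\ge b$ (so $a\ge\ceil{\ell/2}$), and to argue that in the mixed case $b\ge1$ one can find a root together with a subtree securing $a$ agreements from the inward pendant edges plus one further agreement from the pendant edge of a chosen outward leaf used as the root, giving imbalance $\ge a+1\ge\ceil{\ell/2}+1$; the degenerate case $b=0$ is handled separately, where a single root makes all $\ell$ pendant edges agree and $\ell\ge\ceil{\ell/2}+1$. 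The delicate point, which I expect to be the crux, is that the $a$ inward agreements \emph{cannot} simply be harvested by rooting at a global minimum of $H$: the relevant leaves may be clustered beneath a common vertex so that the naive outward subtree cancels them, forcing one to first identify the correct collector vertex and direction (inward versus outward) before charging the $+1$ to the root's own pendant edge. Controlling this clustering—equivalently, ruling out an orientation all of whose root-colourings remain simultaneously optimal for $\D_2$—is where the real work lies.
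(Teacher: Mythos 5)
Your preliminary reduction (``rooting costs nothing'') is correct, and your upper bound is correct and genuinely different from the paper's. The paper proves $\oD(T,\mathcal{DT})\le\ell$ by constructing an orientation inductively: strip a pendant path down to its branching vertex $v$, orient the rest by induction while tracking the two quantities $x_{v\rightarrow},x_{v\leftarrow}$ (largest imbalance of a subtree rooted at $v$ with dominant orientation out of, resp.\ into, $v$), and orient the stripped path alternately so that $x_{v\rightarrow}+x_{v\leftarrow}\le\ell$ is preserved at every vertex. You instead take the canonical bipartite orientation and reduce everything to two static counting facts: a subtree of $T$ has at most $\ell$ leaves, and in any tree on at least two vertices the bipartition classes differ in size by at most (number of leaves) $-1$. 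Both facts are true (your fact (ii) needs $|V(S)|\ge 2$, which is harmless since single-vertex subtrees have imbalance $0$), and your identity for the imbalance of $(S,\rho)$, namely $|n_1(S)-n_0(S)\mp 1|$ with the sign determined by the class of $\rho$, checks out. This is arguably cleaner than the paper's proof: it exhibits one explicit, non-recursive orientation, at the price of the purely numerical induction in fact (ii).

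The lower bound, however, has a genuine gap, and you flag it yourself. What you actually prove is $\oD(T,\mathcal{DT})\ge\D_2(T,\cT)=\ceil{\ell/2}$; the extra $+1$ --- which is the entire content of the statement beyond \cref{thm:mclr} --- is left as a plan whose crux you admit is unresolved. Worse, the plan's accounting is broken even in your ``easy'' degenerate case $b=0$: counting agreeing pendant edges does not lower-bound the imbalance of any subtree, because the connected subtree needed to collect those pendant edges necessarily contains internal edges, whose signs can cancel the pendant contributions. Concretely, take the spider with $\ell$ legs of length $2$, orient every pendant edge towards its leaf and every internal edge towards the centre. Then every (non-leaf) root sees all $\ell$ pendant edges agreeing, but the only subtree containing all of them is $T$ itself, whose imbalance rooted at the centre is $\ell-\ell=0$. (The theorem's bound still holds for this orientation, but it is witnessed by the subtree of \emph{internal} edges rooted at the centre, which contains no pendant edge at all --- so the charging scheme must look entirely different from the one you propose.)

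For comparison, here is how the paper extracts the $+1$. Stars are handled directly (\cref{prop:odisc:star}). Otherwise pick a non-leaf $u$ with a non-leaf neighbour $v$, colour each edge red/blue according to whether it points towards $u$, and let $T^*$ be a subtree of maximal $2$-colour imbalance, which is at least $\ceil{\ell/2}$ by \cref{thm:mclr}; by maximality all edges leaving $T^*$ are of the minority colour. If $T^*$ misses $u$, or contains $u$ only as a leaf, one such edge (in the direction of $u$) can be appended and the enlarged tree rooted so that the new edge reinforces the majority, giving the $+1$. In the remaining case \emph{every} maximal subtree contains $u$ together with all its incident edges, all red; the paper then contracts the red edge $\{u,v\}$, applies \cref{thm:mclr} again to $T/e$ (which still has $\ell$ leaves since $v$ is not a leaf), argues the new maximal subtree again contains $u$, and de-contracts $e$: the restored red edge supplies the $+1$ whichever colour dominates. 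Some structural dichotomy of this kind for maximal-imbalance subtrees --- rather than a count of pendant-edge orientations --- is what your argument still needs.
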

The lower bound is sharp
(exactly and for every $\ell$),
since a star with $\ell$ leaves
that is oriented as evenly as possible has oriented imbalance $\ceil{\ell/2}+1$
(see \cref{prop:odisc:star}).
We conjecture that one can obtain a better upper bound
that matches the lower bound asymptotically.

\begin{conjecture}\label{conj:orient}
  For every tree $T$ with $\ell$ leaves,
  $\oD(T,\mathcal{DT})=(1+o(1))\ell/2$.
\end{conjecture}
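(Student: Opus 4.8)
The lower bound is already supplied by \cref{thm:orient}: since $\ceil{\ell/2}+1=(1+o(1))\ell/2$, it suffices to improve the upper bound, i.e.\ to show that every tree $T$ with $\ell$ leaves admits an orientation of its edges under which \emph{every} directed rooted subtree has imbalance at most $(1/2+o(1))\ell$. Fix an orientation (the adversary's choice). It is convenient to rephrase the optimal response through a tree dynamic program. For an ordered pair of adjacent vertices $(u,v)$ write $s(v,u)=+1$ if the edge is oriented from $v$ to $u$ and $s(v,u)=-1$ otherwise, and set
\[
  h(u\leftarrow v)=\max\Bigl\{0,\ s(v,u)+\sum_{w\sim u,\ w\ne v}h(w\leftarrow u)\Bigr\},
  \qquad
  G(v)=\sum_{u\sim v}h(u\leftarrow v).
\]
A standard induction shows that, for a fixed orientation, the maximum imbalance of a subtree rooted at $v$ and oriented away from $v$ equals $G(v)$; hence the largest positive imbalance over all directed rooted subtrees is $\max_v G(v)$, and the largest negative imbalance is the same quantity computed with all signs $s$ reversed. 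The goal thus becomes to construct an orientation with $\max_v G(v)\le(1/2+o(1))\ell$ under both sign conventions.

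The two extreme cases already reveal the obstruction. Orienting $T$ so that every internal vertex is a source or a sink (the bipartition orientation) forces the signs $s$ to alternate along every path, which keeps each $h(\cdot\leftarrow\cdot)$ in $\{0,1\}$ along paths and so controls path-like trees; but at a high-degree source it makes $G$ equal to the degree, which is catastrophic for stars. Conversely, balancing the out-degree at every vertex tames stars but produces long directed paths, along which $h$ grows linearly. The plan is therefore a hybrid. Consider the \emph{branch vertices} (degree $\ge 3$) together with the paths of degree-$2$ vertices joining them and the pendant paths ending at leaves. I would orient the connecting and pendant paths alternately, which keeps the downstream quantity $\sum_{w\sim u,\,w\ne v}h(w\leftarrow u)$ bounded by a constant whenever $u$ has degree $2$; and at each branch vertex I would orient the incident edges so as to balance the number of branches with positive downstream contribution against those with negative contribution, cancelling roughly half of them through the $\max\{0,\cdot\}$ in the recursion. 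Processing the vertices from the leaves inward lets one commit each branch vertex only after its downstream $h$-values are known.

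The analysis then proceeds by charging. With the $h$-values bounded along every degree-$2$ path, the value $G(v)$ at a branch vertex of degree $d$ is governed by the number of incident branches that survive the balancing, which is about $d/2$; summing the surviving excess over all branch vertices, one wants to telescope against the identity $\ell=2+\sum_v(\deg v-2)^+$ (the same identity that also bounds the number of leaves of every subtree of $T$ by $\ell$), since $\sum_{v:\deg v\ge3}(\deg v-2)=\ell-2$ is exactly the budget yielding $(1/2+o(1))\ell$. The lower-order slack comes from the $+1$ per surviving branch and from imperfect balancing at vertices of odd degree; these must be shown to sum to $o(\ell)$, again by charging each unit to a distinct leaf.

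The main obstacle is \emph{re-rooting consistency}: a single orientation must make $G(v)$ small for \emph{every} root $v$ and for both sign conventions at once, whereas the greedy construction above is naturally tuned to one downstream direction. Moving the root across a branch vertex swaps which incident branches count as downstream, so a choice of $s$ that cancels half the branches for one root may align them for another; controlling $h(u\leftarrow v)$ for all ordered pairs $(u,v)$ simultaneously — rather than for a fixed rooting — is precisely where the gap between the easy bound $\ell$ and the conjectured $\ell/2$ lives. I expect that a clean resolution will require either an averaged or randomised orientation that is automatically root-symmetric, or a global potential $h\colon V(T)\to\RR$ inducing the orientation (orient $u\to v$ when $h(u)<h(v)$) for which the telescoping sum $\sum_{(p,c)\in A}\sgn(h(c)-h(p))$ over any rooted subtree $A$ can be bounded by its number of leaves. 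Constructing such a potential, and simultaneously ruling out trees that genuinely force imbalance bounded away from $\ell/2$, is the crux that keeps the statement a conjecture.
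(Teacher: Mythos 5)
What you were given to prove is \cref{conj:orient}, which the paper explicitly leaves \emph{open}: the paper establishes only $\ceil{\ell/2}+1\le\oD(T,\mathcal{DT})\le\ell$ (\cref{thm:orient}), so there is no proof in the paper to match your attempt against, and your proposal --- as you yourself concede in its final paragraph --- is a research programme rather than a proof. The parts that are solid are exactly the parts that parallel the paper's existing machinery: your recursion $h(u\leftarrow v)=\max\{0,\,s(v,u)+\sum_{w\sim u,\,w\ne v}h(w\leftarrow u)\}$ with $G(v)=\sum_{u\sim v}h(u\leftarrow v)$ is a correct reformulation of the maximum rooted imbalance, and it is essentially the same recursion the paper uses, both as the pair $x_{v\rightarrow}^T,x_{v\leftarrow}^T$ in the proof of the upper bound of \cref{thm:orient} and as the colour-profile formula $M_j(b;T)=\sum_{v\in N(b)}\max\{\cdot,0\}$ in \cref{sec:algo}. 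Likewise the reduction to improving the upper bound is correct, since the star (\cref{prop:odisc:star}) already pins the lower bound at $\ceil{\ell/2}+1=(1+o(1))\ell/2$.

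The gap is the one you name --- re-rooting consistency --- and it is worth seeing concretely why your hybrid construction does not close it. First, the truncation $\max\{0,\cdot\}$ means the cancellation you engineer at a branch vertex is local and one-sided: negative surplus is absorbed rather than propagated, so balanced choices made for one downstream direction and one sign convention give no credit when the root moves across that vertex or when the signs are flipped, and the charging of ``surviving excess'' to distinct leaves is never justified as a global telescoping. Second, the paper's induction succeeds for the bound $\ell$ precisely because its invariant is the \emph{sum} $x_{v\rightarrow}^T+x_{v\leftarrow}^T\le\ell$ at every vertex, and the balanced star shows this invariant is tight (at the centre, $x_{\rightarrow}+x_{\leftarrow}=\ceil{\ell/2}+\floor{\ell/2}=\ell$); hence any proof of the conjecture cannot improve the sum but must additionally guarantee that it splits roughly evenly, $\max\{x_{v\rightarrow}^T,x_{v\leftarrow}^T\}\le(1/2+o(1))\ell$, \emph{simultaneously at every vertex} of a single orientation. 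Your leaves-inward greedy does not enforce this: committing the orientation at one branch vertex alters the downstream $h$-values seen from every vertex above it, for both sign conventions at once, and nothing in the sketch (neither the proposed randomised orientation nor the potential function $h:V(T)\to\RR$) is actually constructed or analysed. So the proposal is an honest and reasonable plan whose central step is missing; the statement remains, as in the paper, a conjecture.
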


The multicolour discrepancy $\D_r$ and the signed discrepancy $\sD$
are two natural generalisations of the classical notion of discrepancy $\D$,
both of combinatorial nature.
In some sense, however, they lack the geometric aspect of discrepancy.
In particular, $\D_r$ is not even generally monotone in $r$.
Following Tao~\cite{Tao16},
we may generalise the definition of discrepancy geometrically,
by allowing vector-valued colouring functions.
Here, we restrict our attention to the (already challenging) case of the vector space $\RR^d$.
For $d\ge 0$, let $\SS^d$ denote the $d$-dimensional unit hypersphere in $\RR^{d+1}$.
A $d$-dimensional colouring of $\cH$
is a function $f:\cV\to\SS^d$.
For a hyperedge $A$ we set $f(A)=\sum_{a\in A} f(a)$,
and $|f(A)|$ is called the \defn{imbalance} of $A$.
We define the \defn{$d$-dimensional discrepancy}
of $\cH$ to be
\[
  \D^d(\cH) = \min_{f:\cV\to\SS^d} \max_{A\in\cE} |f(A)|.
\]
We observe that $\D^0=\D_2$,
and that $\D^{d'}\le\D^d$ whenever $d'\ge d$.
Understanding $\D^0$ quite well, we move on to study $\D^d$ for $d\ge 1$.

While the $r$-colour discrepancy is more combinatorial in nature
and the $d$-dimensional discrepancy is more geometric,
the following proposition relates the two notions.
\begin{proposition}\label{prop:complex}
  For every $r\ge 2$, $d\ge 1$ and hypergraph $\cH$,
  $\D^d(\cH)\le \D^1(\cH) \le \lD_r(\cH)$.
\end{proposition}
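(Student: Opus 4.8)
The statement splits into two inequalities, which I would treat separately.

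For the first inequality $\D^d(\cH)\le\D^1(\cH)$ (valid for all $d\ge 1$), I would simply invoke the monotonicity noted just above the proposition, namely that $\D^{d'}\le\D^d$ whenever $d'\ge d$. Applied with the pair $(d,1)$ and $d\ge 1$, this gives $\D^d(\cH)\le\D^1(\cH)$ directly. For completeness I would record why the monotonicity holds: there is an isometric embedding $\SS^1\hookrightarrow\SS^d$ as an equatorial subsphere (append $d-1$ zero coordinates), so any colouring $f:\cV\to\SS^1$ may be regarded as a colouring into $\SS^d$ without changing any imbalance $|f(A)|$; hence the minimum defining $\D^d$ ranges over a larger class of colourings and can only be smaller.

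The content of the proposition is the second inequality, $\D^1(\cH)\le\lD_r(\cH)$. The plan is to convert an optimal $r$-colouring into a $1$-dimensional colouring by placing the $r$ colours at the $r$-th roots of unity on the circle. Concretely, identify $\SS^1$ with the unit circle in $\CC\cong\RR^2$, set $\omega=e^{2\pi i/r}$, and fix a colouring $f:\cV\to[r]$ attaining $\lD_r(\cH)$. Define $g:\cV\to\SS^1$ by $g(a)=\omega^{f(a)}$. Since $\D^1(\cH)\le\max_{A\in\cE}|g(A)|$, it suffices to show $|g(A)|\le\lD_r(\cH)$ for every $A\in\cE$. Writing $n_j=|f^{-1}(j)\cap A|$, so that $\sum_{j\in[r]}n_j=|A|$, we have $g(A)=\sum_{j\in[r]}n_j\,\omega^j$. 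The key algebraic step is to recenter using the identity $\sum_{j\in[r]}\omega^j=0$, which lets us subtract the average $|A|/r$ from each coefficient at no cost:
\[
  g(A)=\sum_{j\in[r]}\Bigl(n_j-\tfrac{|A|}{r}\Bigr)\omega^j
       =\frac1r\sum_{j\in[r]}\bigl(r\,n_j-|A|\bigr)\omega^j.
\]
Applying the triangle inequality together with $|\omega^j|=1$ then yields
\[
  |g(A)|\le\frac1r\sum_{j\in[r]}\bigl|r\,n_j-|A|\bigr|
        \le\frac1r\cdot r\cdot\max_{j\in[r]}\bigl|r\,n_j-|A|\bigr|
        \le\lD_r(\cH),
\]
where the final inequality is exactly the definition of $\lD_r$ evaluated at the optimal colouring $f$. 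Taking the maximum over $A\in\cE$ completes the argument.

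I do not expect a genuine obstacle here: the only real idea is the choice of colouring by roots of unity together with the recentering identity, after which the triangle inequality and bookkeeping suffice. The point worth flagging is that the term-by-term triangle bound is lossy in general, since it discards the cancellation among the vectors $\omega^j$ (for $r=2$ it happens to be tight, recovering $\D^1\le\D$). Consequently this approach is unlikely to yield the sharper combinatorial quantity $\D_r$ of \eqref{eq:mclr} in place of $\lD_r$; improving the bound in that direction would require exploiting the geometry of the roots of unity rather than bounding each coordinate in isolation.
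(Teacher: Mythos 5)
Your proposal is correct and follows essentially the same route as the paper: both convert an optimal $r$-colouring into a map to $\SS^1$ via the $r$-th roots of unity, recenter using $\sum_{j\in[r]}\omega^j=0$, and finish with the triangle inequality and the coordinatewise bound $|r\,n_j-|A||\le\lD_r(\cH)$. The only (harmless) difference is that you explicitly justify the first inequality $\D^d\le\D^1$ via the equatorial embedding of $\SS^1$ into $\SS^d$, which the paper treats as an earlier observation and omits from its proof.
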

We prove this proposition in~\cref{sec:highdim}.
It follows from \cref{eq:mclr,thm:mclr,prop:complex}
that the high-dimensional tree-discrepancy of a tree with $\ell$ leaves
is at most $\ceil{\ell/2}$ (for every dimension $d\ge 1$).
In the next theorem we prove a lower bound
that we believe that under some assumptions matches the upper bound (see \cref{conj:complex:lb}).
Let $\Beta(z_1,z_2)$ be the \defn{beta function},
and recall that $\Beta(z_1,z_2)=\Gamma(z_1)\Gamma(z_2)/\Gamma(z_1+z_2)$,
where $\Gamma$ is the \defn{gamma function}.
\begin{theorem}[high-dimensional discrepancy]\label{thm:highdim:lb}
  For every tree $T$ with $\ell$ leaves,
  \[
    \D^d(T,\cT) \ge \frac{\ell}{d\cdot \Beta\left(\frac{d}{2},\frac{1}{2}\right)}.
  \]
  In particular, $\D^1(T,\cT)\ge \ell/\pi$ and
  $\D^d(T,\cT)\ge (1-o_d(1))\ell/\sqrt{2\pi d}$.
\end{theorem}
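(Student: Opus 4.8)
The plan is to pass to a one-dimensional problem by projecting onto a uniformly random direction, and then to extract a heavy connected subtree by a rooted greedy procedure. Fix an arbitrary colouring assigning to each edge $a$ of $T$ a unit vector $f(a)\in\SS^d$, and let $u\in\SS^d$ be chosen uniformly at random. Writing $g_a:=\langle f(a),u\rangle$, for any subtree $A$ we have $|f(A)|\ge\langle f(A),u\rangle=\sum_{a\in A}g_a$, so it suffices to produce, for each $u$, a subtree $A(u)$ with $\sum_{a\in A(u)}g_a$ large in expectation: since $\max_A|f(A)|\ge\sum_{a\in A(u)}g_a$ holds for every outcome of $u$, the constant left-hand side is also at least the expectation over $u$ of the right-hand side. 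Two facts about $g_a$ drive the argument. First, by the symmetry $u\leftrightarrow -u$ we have $\E_u[g_a]=0$. Second, the quantity $\mu:=\E_u|g_a|$ is the same for every edge (as each $f(a)$ is a unit vector) and equals the mean absolute value of one coordinate of a uniform point of $\SS^d$; I would compute it from the marginal density $\propto(1-t^2)^{(d-2)/2}$ on $[-1,1]$, whose normalising constant is $1/\Beta(\tfrac12,\tfrac d2)$, obtaining $\mu=\tfrac{2}{d\,\Beta(d/2,1/2)}$.

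For the construction, root $T$ at an internal vertex $\rho$, so that all $\ell$ leaves of $T$ are strict descendants of $\rho$; the only tree with no internal vertex is a single edge, for which $\ell=2$ and the claimed bound $2/(d\,\Beta(d/2,1/2))\le 2/\pi<1$ is met trivially by the single edge. For a vertex $v$ let $h(v)$ be the maximum of $\sum_{a\in B}g_a$ over subtrees $B$ of the subtree $T_v$ hanging at $v$ that contain $v$; a standard argument gives $h(v)=\sum_{c}\bigl(g_{e_c}+h(c)\bigr)_+$, where $c$ ranges over the children of $v$, $e_c$ is the edge from $v$ to $c$, and $x_+:=\max\{x,0\}$. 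Taking $A(u)$ to be an optimal subtree at the root yields $\max_A|f(A)|\ge\E_u[h(\rho)]$. The key claim, proved by induction on height, is that $\E_u\bigl[(g_{e_c}+h(c))_+\bigr]\ge L(c)\cdot\tfrac{\mu}{2}$, where $L(c)$ is the number of leaves of $T$ lying in $T_c$. When $c$ is a leaf this is the identity $\E_u[(g_{e_c})_+]=\tfrac12\E_u|g_{e_c}|=\tfrac{\mu}{2}$ from symmetry; when $c$ is internal, the pointwise bound $(x)_+\ge x$ together with $\E_u[g_{e_c}]=0$ reduce it to $\E_u[h(c)]=\sum_{c'}\E_u[(g_{e_{c'}}+h(c'))_+]$, which by the inductive hypothesis (and $L(c)=\sum_{c'}L(c')$) is at least $L(c)\tfrac{\mu}{2}$. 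Summing the claim over the children of $\rho$ gives $\E_u[h(\rho)]\ge\ell\cdot\tfrac{\mu}{2}$.

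Combining these steps shows that for our arbitrary colouring $\max_A|f(A)|\ge\ell\cdot\tfrac{\mu}{2}=\tfrac{\ell}{d\,\Beta(d/2,1/2)}$, and since $f$ was arbitrary the same bound holds for $\D^d(T,\cT)=\min_f\max_A|f(A)|$. The two special cases then follow by evaluating $\Beta(\tfrac12,\tfrac12)=\pi$ and by the Stirling estimate $d\,\Beta(d/2,1/2)=(1+o_d(1))\sqrt{2\pi d}$. I expect the only genuine obstacle to be the handling of the internal edges, whose projections $g_a$ may be negative and could in principle cancel the leaf contributions needed to connect the chosen leaves; the resolution is that rooting the tree and using the pointwise inequality $(x)_+\ge x$ makes each mean-zero internal edge contribute nothing in expectation while never forcing a loss, so that only the $\ell$ leaf edges, each worth $\tfrac{\mu}{2}$, survive the accounting.
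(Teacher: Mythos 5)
Your proof is correct, and its probabilistic backbone is the same as the paper's: project every colour onto a uniformly random direction $u\in\SS^d$, note that each leaf edge contributes $\E_u\left|\langle f(e),u\rangle\right| = \frac{2}{d\Beta\left(\frac{d}{2},\frac{1}{2}\right)}$ (the same marginal-density computation the paper isolates as a lemma), that internal edges are mean-zero, and then derandomise by averaging. Where you genuinely diverge is in how a single subtree is extracted from the projected weights. The paper's extraction is a two-tree trick: it keeps \emph{all} internal edges and forms $T_{\mathbf{v}}$ (delete the negatively projecting leaf edges) and $T'_{\mathbf{v}}$ (delete the positively projecting ones); the projection of $D(T_{\mathbf{v}})-D(T'_{\mathbf{v}})$ onto $\mathbf{v}$ is exactly $\sum_{e\in L}|\langle f(e),\mathbf{v}\rangle|$, and the triangle inequality hands one of the two trees an imbalance of at least half of that, so the factor $2$ is lost at the very end. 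You instead root $T$ at an internal vertex, define the optimal subtree through the root by the dynamic-programming recursion $h(v)=\sum_{c}\bigl(g_{e_c}+h(c)\bigr)_+$, and prove by induction on height that $\E_u[h(\rho)]\ge \ell\mu/2$; your factor $2$ is lost locally, at the leaves, via $\E[(g)_+]=\tfrac12\E|g|$, with internal edges absorbed by $(x)_+\ge x$ and $\E[g_{e_c}]=0$. The paper's argument is shorter and its two candidate trees are explicit; yours is more local and algorithmic (it produces, for each direction, a concrete greedy subtree through any chosen root), and it cleanly covers the degenerate single-edge tree, a case where the paper's assertion $|L|=\ell$ actually fails (there $|L|=1$ while $\ell=2$) and which the paper silently ignores. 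Both routes give exactly the same constant, so neither has a quantitative edge.
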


When $d=1$, it is convenient to identify $\RR^2$ with $\CC$
and think of the colours as complex numbers on the unit circle.
We therefore refer to the $1$-dimensional discrepancy as \defn{complex discrepancy}.
We conclude with a conjecture about complex discrepancy.
\begin{conjecture}\label{conj:complex:lb}
  For every tree $T$ with $\ell$ leaves,
  \[
    \D^1(T,\cT)
    \ge \frac{1}{2\sin\left(\frac{\pi}{2\ell}\right)}
    = (1+o(1))\frac{\ell}{\pi}.
  \]
  If, in addition, $\Delta(T)=\omega(1)$,
  then
  \[
    \D^1(T,\cT) = (1+o(1))\frac{\ell}{\pi}.
  \]
  
\end{conjecture}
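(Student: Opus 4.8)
The plan is to prove the two halves separately: the lower bound $\D^1(T,\cT)\ge 1/(2\sin(\pi/(2\ell)))$ for every tree, and the matching upper bound $\D^1(T,\cT)\le(1+o(1))\ell/\pi$ under the hypothesis $\Delta(T)=\omega(1)$. Fix a colouring $f\colon E(T)\to\SS^1\subseteq\CC$, and for a direction $\phi$ write $w_e(\phi)=\operatorname{Re}(e^{-i\phi}f(e))=\cos(\arg f(e)-\phi)$. Since $|f(A)|\ge\sum_{e\in A}w_e(\phi)$ for every subtree $A$ and every $\phi$, setting $W(\phi)=\max_A\sum_{e\in A}w_e(\phi)$ (the maximum over \emph{connected} subtrees) yields $\max_A|f(A)|\ge\max_\phi W(\phi)$. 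The bound $\D^1(T,\cT)\ge\ell/\pi$ of \cref{thm:highdim:lb} is precisely $\max_\phi W(\phi)\ge\operatorname{avg}_\phi W(\phi)\ge\ell/\pi$, where passing from the maximum to the average is lossless only when $W$ is constant in $\phi$. The entire gain to be extracted is the fluctuation of $W$ forced by the finiteness of the colour set.

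For the lower bound I would exploit an Euler-tour doubling. Rooting $T$ and recording the root-path sums $S_v=\sum_{e\in P_v}f(e)$, the closed Euler tour of $T$ traces a unit-step closed polygon in $\CC$ through all the points $S_v$, visiting each leaf once and returning to $S_{\text{root}}=0$; each leaf edge $e_v$ is traversed once away from and once towards the root, so it enters with \emph{both} directions $\pm f(e_v)$. This is the structural reason to expect $2\ell$, rather than $\ell$, relevant directions, and hence the circumradius $1/(2\sin(\pi/(2\ell)))$ of the unit-side regular $2\ell$-gon. Concretely, the aim is to reduce $\max_\phi W(\phi)$ to a one-dimensional trigonometric min--max over the leaf directions $\theta_v=\arg f(e_v)$, in which the connectivity of subtrees supplies the antipodal copies, and to show that this optimum equals $1/(2\sin(\pi/(2\ell)))$; that the star already attains the value at $\ell=3$ (where singleton subtrees force $\D^1\ge1$) is a useful sanity check.

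For the upper bound I would construct a near-optimal colouring when $\Delta(T)=\omega(1)$. The idea is that a vertex of large degree lets one distribute its incident edges over many almost-equally-spaced directions, so that every sub-fan has projected mass at most $(1+o(1))\ell/\pi$. I would colour $T$ greedily from a root, spreading the children of each vertex over directions while keeping every root-path sum $S_v$ within $(1+o(1))\ell/\pi$ of the origin; the hypothesis $\Delta(T)=\omega(1)$ guarantees that the leaves are clustered enough at high-degree vertices for this spreading to be effective. One then checks that every connected subtree---decomposed along a root path into up-going and down-going arcs---inherits the same bound.

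The main obstacle is the connectivity constraint in the lower bound. The clean convolution picture, in which $\max_\phi\sum_v(\cos(\theta_v-\phi))^+$ is minimised by a regular configuration, is exact only for the star, where \emph{arbitrary} subsets of edges form subtrees; for a general tree one may only use connected subtrees, and one must show that the adversary cannot use this restriction to push the worst imbalance below the polygon value---equivalently, that the cost of routing selected leaf contributions through (possibly adversarially coloured) internal edges is negligible, and that the Euler-tour doubling genuinely upgrades the $\ell$ leaf directions to a symmetric family of $2\ell$. Pinning down the exact constant $1/(2\sin(\pi/(2\ell)))$, rather than merely $\ell/\pi$, is where I expect the real work to lie; for the upper bound, the analogous difficulty is bounding \emph{all} connected subtrees simultaneously, in particular long leaf-to-leaf paths that might accumulate internal-edge colours.
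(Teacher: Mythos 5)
This statement is \cref{conj:complex:lb}, which the paper explicitly leaves \emph{open}: there is no proof in the paper to compare against, and the closest proved result is \cref{thm:highdim:lb}, whose $d=1$ case gives $\D^1(T,\cT)\ge\ell/\pi$ by averaging the (halved) total leaf projection over a random direction. Your text is a research plan rather than a proof, and you concede as much; unfortunately, everything in it that goes beyond re-deriving \cref{thm:highdim:lb} is exactly what is missing. For the lower bound, note that $1/(2\sin(\pi/(2\ell)))=\ell/\pi+\Theta(1/\ell)$ strictly exceeds the averaging bound, so any proof must quantitatively beat the mean of your $W(\phi)$; the Euler-tour doubling is a heuristic for why $2\ell$ directions should appear, but you never formulate the resulting min--max problem over \emph{connected} subtrees, never show that the adversary's internal edges cost nothing (and since the conjectured constant is exact, not asymptotic, no ``negligible routing cost'' slack is available), and never solve the extremal trigonometric problem even in the star case. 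Your $\ell=3$ sanity check is in fact a warning sign: there $1/(2\sin(\pi/6))=1$ is forced by single-edge subtrees, i.e.\ by the trivial bound $\D^1(T,\cT)\ge 1$, so it validates nothing about the polygon mechanism.

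The upper-bound half has a concrete conceptual flaw in addition to being unexecuted. The invariant you propose to maintain --- every root-path sum $S_v$ within $(1+o(1))\ell/\pi$ of the origin --- does not control subtree imbalances: a subtree consisting of a high-degree vertex $v$ together with $k$ of its incident edges (a fan) has imbalance equal to a subset sum of $k$ unit vectors, governed by the half-plane quantity $\max_\phi\sum_j(\cos(\theta_j-\phi))^+$ at $v$, which is invisible to the root-path sums; and a general subtree decomposes into arbitrarily many root-to-leaf arcs, not the ``up-going and down-going'' two you describe, so triangle-inequality errors accumulate linearly in the number of branches unless cancellation is engineered across branches --- which is the entire difficulty, already present for the star. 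Moreover, your stated use of the hypothesis $\Delta(T)=\omega(1)$ (``leaves are clustered enough at high-degree vertices'') is false in general: a spider with $\omega(1)$ long legs satisfies $\Delta(T)=\omega(1)$ while every leaf is far from the unique branching vertex, so the hypothesis must enter the argument in some other way (plausibly, bounded degree obstructs the value $(1+o(1))\ell/\pi$ from above, cf.\ the single-edge bound), and your sketch supplies no mechanism for it. In sum, the proposal correctly locates the difficulties the conjecture poses but resolves neither of them; both decisive steps remain genuine gaps.
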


\section{Multicolour discrepancy}\label{sec:mclr}
In this section we prove \cref{thm:mclr} and its sharpness.
Given a tree $T$, a colouring $f:E(T)\to[r]$ and a subtree $S$,
write $e_j(S)=|f^{-1}(S)|$ and $w_j(S)=re_j(S)-|E(S)|$.

\begin{proof}[Proof of the lower bound in \cref{thm:mclr}]
  Let $T$ be a tree with $\ell$ leaves,
  and let $f:E(T)\to[r]$ be an $r$-colouring of its edges.
  Denote $m_j=|f^{-1}(j)|$ for $j\in[r]$ and $m=\sum_{j\in[r]}m_j=|E(T)|$.
  We obtain a subtree $T'$ of $T$ be deleting all leaves of $T$,
  and denote $m'_j=|f^{-1}(j)\cap E(T')|$
  and $\ell_j=|f^{-1}(j)\sm E(T')|$
  for $j\in[r]$.
  Write $m'=\sum_{j\in[r]}m'_j$
  and note that $\sum_{j\in[r]}\ell'_j=m-m'=\ell$.
  Finally, we obtain $T_j$ from $T'$ by adding back the edges $f^{-1}(j)\sm E(T')$.
  Observe that
  $w_j(T_j) = rm_j-m'-\ell_j$,
  hence
  \[
    \sum_{j\in[r]} w_j(T_j) = rm-rm'-\ell = (r-1)\ell.
  \]
  Thus, by the pigeonhole principle, there exists $j\in[r]$ for which $w_j\ge\ceil{(r-1)\ell/r}$,
  hence $\D_r(T,\cT)\ge\ceil{(r-1)\ell/r}$.
\end{proof}

We move on to prove the upper bound in \cref{thm:mclr}.
  Consider the pointwise partial order relation on $\RR^r$
  defined as follows:
  for $\m=(m_1,\ldots,m_r)$ and $\n=(n_1,\ldots,n_r)$,
  $\m\le\n$ if and only if $m_j\le n_j$ for every $j\in[r]$.
  For a permutation $\tau$ of $[r]$
  we write $\tau(\m)=(m_{\tau(1)},\ldots,m_{\tau(r)})$.
  We say that $\m$ is \defn{dominated} by $\n$
  and denote it $\m\preceq\n$
  if there exists a permutation $\tau$ of $[r]$ such that
  $\tau(\m)\le\n$.
  We further write $\m\lor\n=(m_1\lor n_1,\ldots,m_r\lor n_r)$,
  where for real numbers $x,y$, $x\lor y=\max\{x,y\}$.
  Call $\m$ is \defn{increasing} if it is (weakly) monotone increasing as a sequence.
  Denote by $\sigma_\m$ the first permutation of $[r]$ (according to some arbitrary fixed ordering)
  for which $\sigma_\m(\m)$ is increasing.
  Write $\mon(\m)=\sigma_\m(\m)$ for the ``monotone version'' of $\m$.
  Let $\min\m$ and $\max\m$
  denote the minimal and maximal coordinate in $\m$, respectively,
  and note that if $\m\preceq\n$ then $\max\m\le\max\n$.
  Say that a vector $\m$ is \defn{$1$-Lipschitz}
  if for every $1\le j<r$, $|\m_{j+1}-\m_j|\le 1$.

  For a vector $\m$,
  let $\alpha_{\m}$ be the vector $(a_1,\ldots,a_r)$
  where $a_j=m_j+r-\sigma_{\m}^{-1}(j)$.
  It is useful to observe that if $\m\preceq\n$
  then $\alpha_{\m}\preceq\alpha_{\n}$.
  It is also useful to observe that if $\m$ is increasing $1$-Lipschitz
  then $\alpha_\m$ is decreasing $1$-Lipschitz.
  For every $j\in[r]$ denote
  $d_j=r-\ceil{(r+1-j)/2}$,
  and let $\d_2=(d_1,\ldots,d_r)$.
  For $\ell\ge 2$ define $\d_{\ell+1}=\mon(\alpha_{\d_\ell})$,
  and note that for $\ell\ge 3$, $\min\d_\ell=\max\d_{\ell-1}$.
  Note futher that since $\d_2$ is increasing $1$-Lipschitz
  then by the discussion above, $\d_\ell$ is increasing $1$-Lipschitz for every $\ell\ge 2$.
  Thus, for every $\ell\ge 3$,
  $(\d_\ell)_j=(\d_{\ell-1})_{r+1-j}+j-1$.
  In particular,
  $\max\d_3 = d_1+r-1=2r-1-\ceil{r/2}=\ceil{3(r-1)/2}$,
  and, for $\ell\ge 4$,
  $\max\d_\ell=\min\d_{\ell-1}+r-1=\max\d_{\ell-2}+r-1$.
  By induction, $\max\d_\ell = \ceil{\ell(r-1)/2}$.
  The following claim will be useful for us.

  \begin{claim}\label{cl:dom}
    For every $\ell\ge 3$,
    if $\m\preceq\d_{\ell-1}$
    and $\n\preceq\d_\ell$
    then $\m\lor\n\preceq\d_\ell$.
  \end{claim}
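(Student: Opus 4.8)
The plan is to translate the domination relation $\preceq$ into a statement about order statistics and then exploit the fact that the coordinates of $\m$ are simply too small to interfere. First I would record the elementary reformulation of domination for integer vectors: $\m\preceq\n$ if and only if $\#\{i : m_i > t\}\le\#\{i : n_i > t\}$ for every threshold $t$ (equivalently, $\mon(\m)\le\mon(\n)$ coordinatewise). The forward direction is immediate from the definition---if $\tau(\m)\le\n$ then $\{k : m_{\tau(k)}>t\}\subseteq\{k : n_k>t\}$ and $\tau$ is a bijection---while the reverse direction follows by matching the $k$-th largest coordinates. Writing $\d_\ell=(q_1,\dots,q_r)$, which is increasing by the discussion preceding the claim, this reduces the goal $\m\lor\n\preceq\d_\ell$ to showing $\#\{i : (\m\lor\n)_i > q_k\}\le r-k$ for every $k\in[r]$: having at most $r-k$ coordinates exceed $q_k$ means at least $k$ are at most $q_k$, i.e.\ $\mon(\m\lor\n)_k\le q_k$, and monotonicity of $\d_\ell$ gives $\#\{j : q_j>q_k\}\le r-k$ so the target $\d_\ell$ itself satisfies this.

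The crux is the observation that every coordinate of $\m$ is bounded by $\min\d_\ell$. Indeed, $\m\preceq\d_{\ell-1}$ gives $\max\m\le\max\d_{\ell-1}$ (as noted in the excerpt), and for $\ell\ge 3$ we have $\max\d_{\ell-1}=\min\d_\ell=q_1$. Consequently $m_i\le q_1\le q_k$ for all $i$ and all $k$, so no coordinate of $\m$ ever exceeds any threshold $q_k$. Hence $(\m\lor\n)_i=\max(m_i,n_i)>q_k$ can happen only when $n_i>q_k$, giving
\[
  \#\{i : (\m\lor\n)_i > q_k\}=\#\{i : n_i > q_k\}.
\]
Applying the reformulation to $\n\preceq\d_\ell$ with threshold $t=q_k$ yields $\#\{i : n_i>q_k\}\le\#\{j : q_j>q_k\}\le r-k$, which is exactly the bound required. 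This proves $\mon(\m\lor\n)_k\le q_k$ for every $k$, i.e.\ $\m\lor\n\preceq\d_\ell$.

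I expect the main (and essentially the only) obstacle to be spotting the reduction together with the height bound $\max\m\le\min\d_\ell$; once these are in place the argument is a short counting computation. The single point where the hypothesis $\ell\ge 3$ enters is the identity $\min\d_\ell=\max\d_{\ell-1}$, so I would be careful to invoke it explicitly. No property of $\m$ beyond $\max\m\le\min\d_\ell$ and no property of $\d_\ell$ beyond monotonicity is used, so the $1$-Lipschitz structure plays no role here and I would deliberately avoid dragging it in.
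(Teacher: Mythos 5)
Your proof is correct, and it rests on exactly the same key observation as the paper's: since $\m\preceq\d_{\ell-1}$ and, for $\ell\ge 3$, $\max\d_{\ell-1}=\min\d_\ell$, every coordinate of $\m$ is at most $\min\d_\ell$, so $\m$ cannot interfere with domination by $\d_\ell$. Where you diverge is in the surrounding machinery. The paper's proof is a two-line reduction: it assumes without loss of generality that $\mon(\n)=\d_\ell$ (i.e.\ it replaces $\n$ by the coordinatewise-larger vector obtained by padding $\n$ up to a permutation of $\d_\ell$), at which point every coordinate of $\n$ is at least $\min\d_\ell\ge\max\m$, so $\m\lor\n=\n$ and there is nothing left to prove. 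You instead establish a threshold-counting characterization of domination ($\m\preceq\n$ iff $\#\{i : m_i>t\}\le\#\{i : n_i>t\}$ for all $t$, equivalently $\mon(\m)\le\mon(\n)$ coordinatewise) and verify the counts directly for an arbitrary $\n\preceq\d_\ell$. Your version is longer but fully self-contained: the paper's WLOG step silently relies on the facts that $\lor$ is monotone in each argument and that $\preceq$ is transitive (so that enlarging $\n$ can only enlarge $\m\lor\n$ in the $\preceq$-order), whereas your counting argument needs no such reduction and handles all $\n$ at once. Both arguments correctly isolate $\ell\ge 3$ as entering only through the identity $\min\d_\ell=\max\d_{\ell-1}$, and both correctly avoid any use of the $1$-Lipschitz structure of $\d_\ell$.
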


  \begin{proof}
    We may assume that $\mon(\n)=\d_\ell$.
    Thus, $\min\n=\min\d_\ell=\max\d_{\ell-1}$,
    hence $\m\lor\n=\n$, and the claim follows.
  \end{proof}

\begin{proof}[Proof of the lower bound in \cref{thm:mclr}]
  For vertices $u,v\in V(T)$,
  let $\cS_v(T)$ denote the set of subtrees $S$ of $T$ that contain the vertex $v$,
  let $\cS_{u,v}(T)$ the set of subtrees $S$ that contain $u,v$,
  and let $\cS_{u,\neg v}(T)$ be the set of subtrees $S$ that contain $u$ but not $v$.
  For $j\in[r]$,
  define $M_j(v;T)=\max_{S\in\cS_v(T)} w_j(S)$
  and analogously $M_j(u,v;T)$ and $M_j(u,\neg v;T)$.
  The \defn{colour profile} of $v$ in $T$
  (with respect to a colouring $f$)
  is the vector $\chi(v;T)=(M_1(v;T),\ldots,M_r(v;T))$.
  Define analogously $\chi(u,v;T)$ and $\chi(u,\neg v;T)$.
  We prove by induction the following statement:
  for every $\ell\ge 2$
  and every tree $T$ with $\ell$ leaves,
  there exists a colouring $f$ of $E(T)$
  for which for every vertex $v\in V(T)$,
  $\chi(v;T)\preceq\d_\ell$.
  This would imply, in particular,
  that for every subtree $S$ of $T$
  and every colour $j\in[r]$,
  $w_j(S)\le\max\chi(v;T)$ for some vertex $v\in V(S)$;
  but for every $v\in V(T)$, $\max\chi(v;T)\le\max\d_\ell\le \ceil{(r-1)\ell/2}$,
  implying the statement of the theorem.
  Our inductive argument yields a concrete explicit colouring of $E(T)$;
  see \cref{sec:algo} for an (implied) efficient algorithmic version.

  The base case is when $\ell=2$.
  Here, $T$ is a path; suppose the edges of the path are $(e_1,\ldots,e_k)$ in this order.
  We colour the path periodically;
  namely, we let $f(e_i)=j$ if and only if $i\equiv j\pmod{r}$.
  Let $v\in V(T)$ and let $S\in\cS_v$ be a subpath of $T$ containing $v$.
  Evidently, $w_j(S)\le r-1$ for every $j\in S$.
  Thus, $\chi(v;T)\preceq\d_2$.

  We move on to the induction step.
  Let $T$ be a tree with $\ell=\ell(T)\ge 3$ and suppose the statement holds for $\ell-1$.
  Let $u$ be a leaf in $T$,
  and let $b$ be the \defn{branching vertex} of $u$,
  namely,
  the nearest vertex to $u$ with degree greater than two.
  Let $P_u$ be the path connecting $u$ to $b$,
  and denote by $T'$ the subtree of $T$ obtained by removing all edges of $P_u$
  and all vertices of $P_u$ but $b$.
  Evidently, $\ell(T')=\ell-1$.
  By the induction hypothesis, there exists an $r$-colouring $f'$ of $E(T')$
  that satisfies $\chi(v;T')\preceq\d_{\ell-1}$ for every $v\in V(T')$.
  We extend $f'$ to a colouring $f$ of $E(T)$ as follows.
  Let $\b'=\chi(b;T')$ be the colour profile of $b$ in $T'$.
  Consider the permutation $\sigma_{\b'}$.
  Colour the edges of $P_u$ periodically according to $\sigma_{\b'}$;
  namely, if $P_u=(e_1,\ldots,e_k)$ (where $b\in e_1$ and $u\in e_k$),
  let $f(e_i)=\sigma_{\b'}(j)$ if and only if $i\equiv j\pmod{r}$.
  Note that for any subpath $Q$ of $P_u$ that contains $b$,
  and for any colour $j\in[r]$,
  $w_j(Q)\le r-\sigma_{\b}^{-1}(j)$.
  We now show that $T$ satisfies the hypothesis (with respect to $f$).
  Namely, we show that for every $v\in V(T)$, $\chi(v;T)\preceq\d_\ell$.
  We consider three separate cases.
  \begin{description}
    \item[Case I, $v=b$:]
      We observe that for every $S\in\cS_b$ and every $j\in[r]$,
      letting $S'=S\cap T'$ and $S^-=S\cap P_u$,
      we have $w_j(S)=w_j(S')+w_j(S^-)\le M_j(b;T')+r-\sigma_{\b'}^{-1}(j)$.
      Thus,
      $\chi(b;T)\le\alpha_{\b'}$.
      By the induction hypothesis,
      $\b'\preceq\d_{\ell-1}$,
      hence
      $\chi(b;T)\le\alpha_{\b'}\preceq\mon(\alpha_{\d_{\ell-1}})=\d_\ell$.
    \item[Case II, $v\in V(P_u)\sm\{b\}$:]
      As with the base case of the induction, we have
      $\chi(v,\neg b;T)\le\d_2\le\d_{\ell-1}$.
      On the other hand,
      $\chi(v,b;T)\le\chi(b;T)\preceq\d_\ell$ (by Case I).
      Thus,
      $\chi(v;T)=\chi(v,\neg b;T)\lor\chi(v,b;T)\preceq\d_\ell$
      (by \cref{cl:dom}).
    \item[Case III, $v\in V(T')\sm\{b\}$:]
      By the induction hypothesis
      $\chi(v,\neg b;T)\le\chi(v;T')\preceq\d_{\ell-1}$.
      On the other hand,
      $\chi(v,b;T)\le\chi(b;T)\preceq\d_\ell$.
      Thus,
      $\chi(v;T)=\chi(v,\neg b;T)\lor\chi(v,b;T)\preceq\d_\ell$
      (by \cref{cl:dom}).
  \end{description}
  The proof is now complete.
\end{proof}

\subsection{Algorithmic aspect}\label{sec:algo}
We briefly discuss how the inductive argument presented
in the proof of the upper bound of \cref{thm:mclr}
yields a simple and efficient algorithm for finding a colouring the achieves at least the upper bound.

We begin by describing an efficient algorithm to compute the colour profile of a vertex $b$
in an $r$-coloured tree $T$.
The input is a given tree $T$ with $m$ edges, a vertex $b$, and an $r$-colouring $f$.
For every vertex $v$ of $T$,
let $T_v$ denote the tree rooted at $v$ comprised of $v$ and all its descendants in $T$.
Now observe that, by considering the imbalance of color $j$ at each subtree $T_v$, $v\in N(b)$,
we have
\[
  M_j(b;T) = \sum_{v\in N(b)} \max\{r\cdot \mathbf{1}_{f(\{b,v\})=j}-1+M_j(v;T_v),0\}.
\]
Hence, computing $\chi(b;T)$ requires $O(rm)$ steps.

We proceed by describing the colouring procedure.
We are given a tree $T$ with $m$ edges and $\ell$ leaves,
and a number of colours $r$.
Let $u_1,u_2$ be two distinct leaves of $T$,
and let $P$ be the unique path between them in $T$.
We colour $P$ alternately with a fixed (arbitrary) cyclic order of the colours.
Set $T'=P$.
We then iterate over the remaining $\ell-2$ leaves:
given a leaf $u$ that is not in $T'$,
let $P_u$ be the unique path in $T$ from $u$ to $T'$,
and let $b$ be the last vertex in the path (so $b\in V(T')$).
We can calculate the colour profile $\chi(b;T')$ of $b$ in $T'$ in $O(rm)$ steps.
Given the colour profile,
we colour the path from $b$ to $u$ alternately with a cyclic order of the colours,
from the least popular colour up to the most popular.
That is, the order of colours is $\sigma_{\chi(b;T')}$.
We then add the new coloured path to $T'$ and continue to the next leaf outside $T'$.

This algorithm runs, therefore, in $O(rm\ell)$ steps.
Its correctness was verified recursively in the proof of the upper bound in \cref{thm:mclr}.
See \cref{fig:ub} for a visualisation of the algorithm.

\tikzset{vertex/.style={fill,circle,inner sep=1.5pt},
         fade/.style={black!20!white},
         tvx/.style={vertex,fade},
         ted/.style={very thin,fade},
         edge/.style={very thick},
         er/.style={edge,red},
         eg/.style={edge,ForestGreen,densely dashed},
         eb/.style={edge,blue,densely dotted}
         }
\begin{figure*}[t!]
  \captionsetup{width=0.879\textwidth,font=small}
  \centering
  \begin{subfigure}[t]{0.29\textwidth}
    \centering
    \begin{tikzpicture}[scale=0.5]
      \clip (-0.5,-3.5) rectangle (7.5,2.5);
      \node[vertex] (v1) at (0,0) {};
      \node[vertex] (v2) at (1,0) {};
      \node[vertex] (v3) at (2,0) {};
      \node[vertex] (v4) at (3,0) {};
      \node[vertex] (v5) at (4,0) {};
      \node[vertex] (v6) at (5,0) {};
      \node[vertex] (v7) at (6,0) {};
      \node[tvx] (v8) at (1,-1) {};
      \node[tvx] (v9) at (1,-2) {};
      \node[tvx] (v10) at (3,-1) {};
      \node[tvx] (v11) at (3,-2) {};
      \node[tvx] (v12) at (3,-3) {};
      \node[tvx] (v13) at (5,1) {};
      \node[tvx] (v14) at (5,2) {};
      \node[tvx] (v15) at (6,2) {};
      \node[tvx] (v16) at (4,-2) {};
      \node[tvx] (v17) at (5,-2) {};
      \draw[er] (v1) -- (v2);
      \draw[eg] (v2) -- (v3);
      \draw[eb] (v3) -- (v4);
      \draw[er] (v4) -- (v5);
      \draw[eg] (v5) -- (v6);
      \draw[eb] (v6) -- (v7);
      \draw[ted] (v2) -- (v8);
      \draw[ted] (v8) -- (v9);
      \draw[ted] (v4) -- (v10);
      \draw[ted] (v10) -- (v11);
      \draw[ted] (v11) -- (v12);
      \draw[ted] (v6) -- (v13);
      \draw[ted] (v13) -- (v14);
      \draw[ted] (v14) -- (v15);
      \draw[ted] (v11) -- (v16);
      \draw[ted] (v16) -- (v17);
    \end{tikzpicture}
    \caption{$\ell=2$: a periodic colouring of a path.}
    \label{fig:ub:l2}
  \end{subfigure}%
  ~
  \begin{subfigure}[t]{0.04\textwidth}
    ~
  \end{subfigure}
  \begin{subfigure}[t]{0.29\textwidth}
    \centering
    \begin{tikzpicture}[scale=0.5]
      \clip (-0.5,-3.5) rectangle (7.5,2.5);
      \node[vertex] (v1) at (0,0) {};
      \node[vertex,label={90:$b$}] (v2) at (1,0) {};
      \node[vertex] (v3) at (2,0) {};
      \node[vertex] (v4) at (3,0) {};
      \node[vertex] (v5) at (4,0) {};
      \node[vertex] (v6) at (5,0) {};
      \node[vertex] (v7) at (6,0) {};
      \node[vertex] (v8) at (1,-1) {};
      \node[vertex,label={-90:$u$}] (v9) at (1,-2) {};
      \node[tvx] (v10) at (3,-1) {};
      \node[tvx] (v11) at (3,-2) {};
      \node[tvx] (v12) at (3,-3) {};
      \node[tvx] (v13) at (5,1) {};
      \node[tvx] (v14) at (5,2) {};
      \node[tvx] (v15) at (6,2) {};
      \node[tvx] (v16) at (4,-2) {};
      \node[tvx] (v17) at (5,-2) {};
      \draw[er] (v1) -- (v2);
      \draw[eg] (v2) -- (v3);
      \draw[eb] (v3) -- (v4);
      \draw[er] (v4) -- (v5);
      \draw[eg] (v5) -- (v6);
      \draw[eb] (v6) -- (v7);
      \draw[eb] (v2) -- (v8);
      \draw[er] (v8) -- (v9);
      \draw[ted] (v4) -- (v10);
      \draw[ted] (v10) -- (v11);
      \draw[ted] (v11) -- (v12);
      \draw[ted] (v6) -- (v13);
      \draw[ted] (v13) -- (v14);
      \draw[ted] (v14) -- (v15);
      \draw[ted] (v11) -- (v16);
      \draw[ted] (v16) -- (v17);
    \end{tikzpicture}
    \caption{$\ell=3$:
             $\chi(b;T')=(2,2,1)$.
             We thus colour $P_u$ periodically blue--red--green.}
    \label{fig:ub:l3}
  \end{subfigure}
  \begin{subfigure}[t]{0.04\textwidth}
  ~
  \end{subfigure}
  \begin{subfigure}[t]{0.29\textwidth}
    \centering
    \begin{tikzpicture}[scale=0.5]
      \clip (-0.5,-3.5) rectangle (7.5,2.5);
      \node[vertex] (v1) at (0,0) {};
      \node[vertex] (v2) at (1,0) {};
      \node[vertex] (v3) at (2,0) {};
      \node[vertex,label={90:$b$}] (v4) at (3,0) {};
      \node[vertex] (v5) at (4,0) {};
      \node[vertex] (v6) at (5,0) {};
      \node[vertex] (v7) at (6,0) {};
      \node[vertex] (v8) at (1,-1) {};
      \node[vertex] (v9) at (1,-2) {};
      \node[vertex] (v10) at (3,-1) {};
      \node[vertex] (v11) at (3,-2) {};
      \node[vertex,label={0:$u$}] (v12) at (3,-3) {};
      \node[tvx] (v13) at (5,1) {};
      \node[tvx] (v14) at (5,2) {};
      \node[tvx] (v15) at (6,2) {};
      \node[tvx] (v16) at (4,-2) {};
      \node[tvx] (v17) at (5,-2) {};
      \draw[er] (v1) -- (v2);
      \draw[eg] (v2) -- (v3);
      \draw[eb] (v3) -- (v4);
      \draw[er] (v4) -- (v5);
      \draw[eg] (v5) -- (v6);
      \draw[eb] (v6) -- (v7);
      \draw[eb] (v2) -- (v8);
      \draw[er] (v8) -- (v9);
      \draw[eg] (v4) -- (v10);
      \draw[er] (v10) -- (v11);
      \draw[eb] (v11) -- (v12);
      \draw[ted] (v6) -- (v13);
      \draw[ted] (v13) -- (v14);
      \draw[ted] (v14) -- (v15);
      \draw[ted] (v11) -- (v16);
      \draw[ted] (v16) -- (v17);
    \end{tikzpicture}
    \caption{$\ell=4$:
             $\chi(b;T')=(3,2,3)$.
             We thus colour $P_u$ periodically green--red--blue.}
    \label{fig:ub:l4}
  \end{subfigure}\\\vspace{1em}
  \begin{subfigure}[t]{0.29\textwidth}
    \centering
    \begin{tikzpicture}[scale=0.5]
      \clip (-0.5,-3.5) rectangle (7.5,2.5);
      \node[vertex] (v1) at (0,0) {};
      \node[vertex] (v2) at (1,0) {};
      \node[vertex] (v3) at (2,0) {};
      \node[vertex] (v4) at (3,0) {};
      \node[vertex] (v5) at (4,0) {};
      \node[vertex,label={-90:$b$}] (v6) at (5,0) {};
      \node[vertex] (v7) at (6,0) {};
      \node[vertex] (v8) at (1,-1) {};
      \node[vertex] (v9) at (1,-2) {};
      \node[vertex] (v10) at (3,-1) {};
      \node[vertex] (v11) at (3,-2) {};
      \node[vertex] (v12) at (3,-3) {};
      \node[vertex] (v13) at (5,1) {};
      \node[vertex] (v14) at (5,2) {};
      \node[vertex,label={0:$u$}] (v15) at (6,2) {};
      \node[tvx] (v16) at (4,-2) {};
      \node[tvx] (v17) at (5,-2) {};
      \draw[er] (v1) -- (v2);
      \draw[eg] (v2) -- (v3);
      \draw[eb] (v3) -- (v4);
      \draw[er] (v4) -- (v5);
      \draw[eg] (v5) -- (v6);
      \draw[eb] (v6) -- (v7);
      \draw[eb] (v2) -- (v8);
      \draw[er] (v8) -- (v9);
      \draw[eg] (v4) -- (v10);
      \draw[er] (v10) -- (v11);
      \draw[eb] (v11) -- (v12);
      \draw[er] (v6) -- (v13);
      \draw[eb] (v13) -- (v14);
      \draw[eg] (v14) -- (v15);
      \draw[ted] (v11) -- (v16);
      \draw[ted] (v16) -- (v17);
    \end{tikzpicture}
    \caption{$\ell=5$:
             $\chi(b;T')=(3,4,3)$.
             We thus colour $P_u$ periodically red--blue--green.}
    \label{fig:ub:l5}
  \end{subfigure}
  \begin{subfigure}[t]{0.04\textwidth}
  ~
  \end{subfigure}
  \begin{subfigure}[t]{0.29\textwidth}
    \centering
    \begin{tikzpicture}[scale=0.5]
      \clip (-0.5,-3.5) rectangle (7.5,2.5);
      \node[vertex] (v1) at (0,0) {};
      \node[vertex] (v2) at (1,0) {};
      \node[vertex] (v3) at (2,0) {};
      \node[vertex] (v4) at (3,0) {};
      \node[vertex] (v5) at (4,0) {};
      \node[vertex] (v6) at (5,0) {};
      \node[vertex] (v7) at (6,0) {};
      \node[vertex] (v8) at (1,-1) {};
      \node[vertex] (v9) at (1,-2) {};
      \node[vertex] (v10) at (3,-1) {};
      \node[vertex,label={180:$b$}] (v11) at (3,-2) {};
      \node[vertex] (v12) at (3,-3) {};
      \node[vertex] (v13) at (5,1) {};
      \node[vertex] (v14) at (5,2) {};
      \node[vertex] (v15) at (6,2) {};
      \node[vertex] (v16) at (4,-2) {};
      \node[vertex,label={0:$u$}] (v17) at (5,-2) {};
      \draw[er] (v1) -- (v2);
      \draw[eg] (v2) -- (v3);
      \draw[eb] (v3) -- (v4);
      \draw[er] (v4) -- (v5);
      \draw[eg] (v5) -- (v6);
      \draw[eb] (v6) -- (v7);
      \draw[eb] (v2) -- (v8);
      \draw[er] (v8) -- (v9);
      \draw[eg] (v4) -- (v10);
      \draw[er] (v10) -- (v11);
      \draw[eb] (v11) -- (v12);
      \draw[er] (v6) -- (v13);
      \draw[eb] (v13) -- (v14);
      \draw[eg] (v14) -- (v15);
      \draw[eg] (v11) -- (v16);
      \draw[eb] (v16) -- (v17);
    \end{tikzpicture}
    \caption{$\ell=6$:
             $\chi(b;T')=(5,3,4)$.
             We thus colour $P_u$ periodically green--blue--red.}
    \label{fig:ub:l6}
  \end{subfigure}

  \caption{Visualisation of the inductive $3$-colouring of a tree.
           The colour profiles are indexed red--green--blue.}
  \label{fig:ub}
\end{figure*}
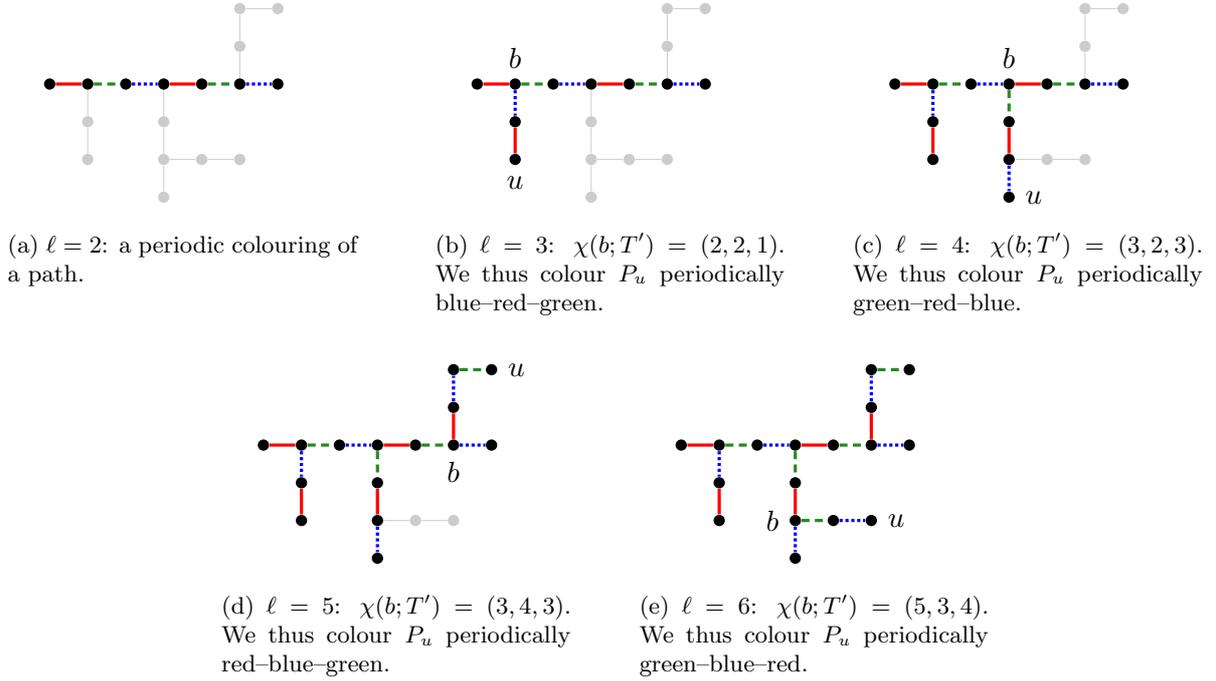

\subsection{Grids}\label{sec:grid}

We now prove \cref{cor:mclr:grid}.
\begin{proof}[Proof of \cref{cor:mclr:grid}]
  Let $G$ be the $2$-dimensional $m\times n$ grid ($m,n\ge 2$).
  Obtain $G^+$ from $G$ by adding a perfect matching covering the $4$ vertices of degree $2$ in $G$,
  so $\delta(G^+)=3$.
  Hence, by~\cite{KW91}*{Theorem 2}, $G^+$ has a spanning tree $T$
  with at least $mn/4+2$ leaves.
  By \cref{thm:mclr},
  \[
    \D_r(G^+,\cT)
    \ge \D_r(T,\cT)
    \ge \frac{r-1}{r}\cdot\left(\frac{mn}{4}+2\right)
    = \frac{r-1}{4r}\cdot mn +2-\frac{2}{r}
    \ge \frac{r-1}{4r}\cdot mn + 1.
  \]
  The result follows since
  $\D_r(G,\cT)\ge\D_r(G^+,\cT)-2Wr$.
\end{proof}

Let us show that this is indeed a strengthening (for $r=2$ and $m,n\ge 3$)
of \cite{BCJP20}*{Corollary~7}.
Assume that $m\le n$.
For $r=2$, \cref{cor:mclr:grid} shows that $\D_2(G,\cT)\ge mn/8-3$,
while \cite{BCJP20}*{Corollary~7} states that $\D_2(G,\cT)>mn/8-n/8-m$.
(In fact, in their proof, they actually give a slightly better bound on the discrepancy;
but our bound is still strictly better when $m\ge 5$.)

\subsection{Constructions}
In the introduction, we showed the lower bound of \cref{thm:mclr} is asymptotically tight. Here, we show the upper bound is (exactly) tight.
\begin{proposition}\label{prop:mclr:ub:sharp}
  For every $r,\ell\ge 2$ there exists a tree $T$ with $\ell(T)=\ell$
  and \[\D_r(T,\cT) = \ceil{(r-1)\cdot\frac{\ell}{2}}.\]
\end{proposition}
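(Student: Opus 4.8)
The plan is to exhibit a single tree---a spider---that meets the upper bound of \cref{thm:mclr} exactly. Fix $r,\ell\ge 2$ and let $T$ be the \emph{spider} obtained by taking $\ell$ paths (``legs''), each with $r-1$ edges, and identifying one endpoint of each leg into a common centre $o$. Then $T$ has exactly $\ell$ leaves (the far endpoints of the legs; for $\ell=2$ this is simply a path and $o$ is an interior vertex). The upper bound $\D_r(T,\cT)\le\ceil{(r-1)\ell/2}$ is immediate from \cref{thm:mclr}, so the entire task is the matching lower bound $\D_r(T,\cT)\ge\ceil{(r-1)\ell/2}$.

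For the lower bound I would fix an arbitrary colouring $f\colon E(T)\to[r]$ and use only subtrees that contain the centre. Orienting each leg away from $o$ and listing its edges in order, for a colour $c$ and a leg $i$ set
\[
  \mu_c^{(i)}=\max_{0\le p\le r-1}\bigl(r\,N_c^{(i)}(p)-p\bigr),
\]
where $N_c^{(i)}(p)$ is the number of colour-$c$ edges among the first $p$ edges of leg $i$. For a fixed colour $c$, attaching to $o$ the initial segment of each leg that attains its $\mu_c^{(i)}$ produces a subtree $S_c$ with $w_c(S_c)=\sum_{i=1}^\ell\mu_c^{(i)}$; hence the imbalance achieved by $f$ is at least $\max_c\sum_i\mu_c^{(i)}$. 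Averaging over the $r$ colours,
\[
  \max_{c\in[r]}\sum_{i=1}^\ell\mu_c^{(i)}\ge\frac1r\sum_{i=1}^\ell\sum_{c\in[r]}\mu_c^{(i)},
\]
so everything reduces to the per-leg estimate $\sum_{c\in[r]}\mu_c^{(i)}\ge\binom r2$.

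The key step---and the only real content---is therefore the claim that for a single leg, i.e.\ a path with $r-1$ arbitrarily coloured edges, $\sum_{c\in[r]}\mu_c\ge\binom r2$. I would prove it by a charging argument: for each colour $c$ let $P_c\subseteq\{1,\dots,r-1\}$ be the set of positions coloured $c$, so the sets $P_c$ partition $\{1,\dots,r-1\}$. Evaluating the imbalance of $c$ at its last position $p_s=\max P_c$ gives
\[
  \mu_c\ge r\lvert P_c\rvert-p_s\ge r\lvert P_c\rvert-\sum_{p\in P_c}p=\sum_{p\in P_c}(r-p),
\]
using that the remaining positions of $P_c$ are nonnegative. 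Summing over $c$ and using the partition,
\[
  \sum_{c\in[r]}\mu_c\ge\sum_{c\in[r]}\sum_{p\in P_c}(r-p)=\sum_{p=1}^{r-1}(r-p)=\binom r2 .
\]
Plugging this into the averaging step yields $\max_c\sum_i\mu_c^{(i)}\ge\ell\binom r2/r=(r-1)\ell/2$; since the left-hand side is an integer (a maximum of sums of integers) and $f$ was arbitrary, $\D_r(T,\cT)\ge\ceil{(r-1)\ell/2}$, which together with the upper bound gives equality. I expect the charging inequality $\mu_c\ge\sum_{p\in P_c}(r-p)$ to be the subtle point: it is what converts the trivial observation ``one colour is already heavy after a single edge'' into a bound that is simultaneously tight for every colour and additive across legs, and it is exactly why legs of length precisely $r-1$ (rather than longer) give the cleanest extremal example.
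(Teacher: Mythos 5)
Your proof is correct and follows essentially the same route as the paper: a spider construction, subtrees containing the centre formed from initial segments of the legs, the same position-based charging inequality giving a total of $\ell\binom{r}{2}$ over all colours, and a concluding pigeonhole/averaging step. The only immaterial differences are that the paper's spider has legs of length $r$ rather than $r-1$, and for each colour it takes the subtree spanned by the root and all edges of that colour instead of per-leg optimal segments.
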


\begin{proof}
  A \defn{spider} is a star-like graph defined as follows:
  for $k\ge 1$ and $\ell\ge 2$,
  $\Sp^k_\ell$ is a tree with a root attached to $\ell$ paths (``legs''),
  each is of length $k$.
  Note that $\ell(\Sp^k_\ell)=\ell$.
  Let $T=\Sp^r_\ell$, and let $f:E(T)\to[r]$ be an $r$-colouring of its edges.
  We identify $E(T)$ by $[\ell]\times[r]$
  by labelling the $h$'th edge (counting from the root) of the $i$'th leg $(i,h)$.
  For $j\in[r]$ let $E_j\subseteq [\ell]\times[r]$ be the set of $j$-coloured edges in $T$.
  For $j\in[r]$, let $S_j$ be the smallest subtree of $T$ that contains the root
  and every $j$-coloured edge.
  Note that $w_j(S_j)\ge \sum_{(i,h)\in E_j}(r-h)$,
  thus
  \[
    \sum_{j\in[r]} w_j(S_j) \ge \sum_{(i,h)\in E(T)}(r-h) = \ell\binom{r}{2}.
  \]
  By the pigeonhole principle,
  there exists $j\in[r]$ for which $w_j(S_j)\ge \ceil{(r-1)\ell/2}$.
\end{proof}

\begin{figure*}[t!]
  \captionsetup{width=0.879\textwidth,font=small}
  \centering
  \begin{tikzpicture}
    \node[vertex] (root) at (0,0) {};
    \foreach \i [evaluate=\i as \a using \i*72,
                 evaluate=\i as \b using \i*72+12,
                 evaluate=\i as \c using \i*72+24
                 ] in {1,2,...,5} {
      \node[vertex] (u\i) at (\a:1) {};
      \node[vertex] (v\i) at (\b:1.707) {};
      \node[vertex] (w\i) at (\c:2.207) {};
      \draw (root) -- (u\i) -- (v\i) -- (w\i);
    }
  \end{tikzpicture}
  \caption{The spider $\Sp^3_5$.}
  \label{fig:spider}
\end{figure*}

\section{Oriented discrepancy}\label{sec:orient}
In this section we prove \cref{thm:orient} and the sharpness of its lower bound.

\begin{proof}[Proof of the lower bound in \cref{thm:orient}]
  The oriented discrepancy of trees in an $\ell$-leaf star is $\ceil{\ell/2}+1$,
  see \cref{prop:odisc:star} below.
  We may therefore assume that $T$ is not a star.
  In particular,
  there exists a vertex $u$ which is not a leaf and has a neighbour which is also not a leaf.
  Consider an arbitrary orientation of the edges of $T$.
  Consider a $2$-colouring of $T$ according to the direction of each edge with respect to $u$:
  colour $e$ red if it is oriented towards $u$, and blue otherwise.
  By \cref{thm:mclr},
  there exist a subtree $T^*$ of $T$ with $2$-colour imbalance at least $\ceil{\frac{\ell}{2}}$.
  Assume $T^*$ maximises the $2$-colour imbalance.
  Assume further, without loss of generality, that the popular colour in $T^*$ is red.
  In particular, every edge from $T^*$ to its complement is blue.
  We claim that $u$ is in $T^*$, and is not a leaf of $T^*$.
  Indeed, if $u$ is not in $T^*$,
  let $w$ be the closest vertex to $u$ in $T^*$.
  Then, since the edge $\{w,z\}$ along the path from $w$ to $u$ is blue,
  the rooted tree $(T^*+z,w)$ has oriented imbalance at least $\ceil{\frac{\ell}{2}}+1$.
  Similarly, if $u$ is a leaf of $T^*$,
  then, since $u$ has a neighbour $v$ outside $T^*$, and the edge $\{u,v\}$ is blue,
  the rooted tree $(T^*+v,v)$ has oriented imbalance at least $\ceil{\frac{\ell}{2}}+1$.
  This shows, in particular, that all edges incident to $u$ are red.

  We conclude that every subtree of $T$ of maximal $2$-colour imbalance contains $u$ and its neighbourhood,
  and that all of these trees have the same popular colour (say, red).
  By the choice of $u$, it has a non-leaf neighbour $v$.
  Let $e=\{u,v\}$
  and consider the tree $T_1=T/e$ that is obtained from $T$ be contracting $e$
  and keeping the original orientations (and the induced $2$-colouring).
  Note that due to the choice of $v$, $u$ is not a leaf of $T_1$
  and $\ell(T_1)=\ell(T)=\ell$.
  Thus, applying \cref{thm:mclr} again
  yields a subtree $T_1^*$ of $T_1$ of maximal $2$-colour imbalance,
  which is at least $\ceil{\frac{\ell}{2}}$.
  By repeating the argument above
  (in which we did not assume that $u$ has a non-leaf neighbour, but only that it is not a leaf itself),
  we conclude that $u$ is in $T_1^*$.
  Let $T_2^*$ be obtained from $T_1^*$ be de-contracting $e$.
  If the dominant colour of $T_1^*$ is blue,
  then the rooted tree $(T_2^*,v)$ has oriented discrepancy at least $\ceil{\frac{\ell}{2}}+1$.
  If the dominant colour of $T_1^*$ is red,
  then the rooted tree $(T_2^*,u)$ has oriented discrepancy at least $\ceil{\frac{\ell}{2}}+1$.
\end{proof}

\begin{proof}[Proof of the upper bound in \cref{thm:orient}]
For a tree $T$ with a fixed orientation
we define $x_{v\rightarrow}^T,x_{v\leftarrow}^T$ to be the largest possible imbalance of a subtree rooted at $v$
in which the dominant orientation is from, respectively to, $v$.
We will prove that there exists an orientation of $T$
such that $x_{v\rightarrow}^T+x_{v\leftarrow}^T\leq \ell$ for all vertices $v$.
We prove the statement by induction on $\ell$.

For a tree with two leaves, i.e., a path, orienting the edges alternately clearly works.
Assume the statement holds for any tree with up to $\ell$ leaves and let $T$ be a tree with $\ell+1$ leaves.
Consider a leaf and the path connecting it to its branching vertex $v$.
Let $T'$ be the tree after removing this path.
By induction, $T'$ admits an orientation for which $x_{v\rightarrow}^{T'}+x_{v\leftarrow}^{T'}\leq \ell-1$ for all $v\in T'$. Assume, without loss of generality that $x_{v\to}^{T'}$ is the smallest of the two.
Now, orient the edges of the removed path alternately,
where the edge incident to $v$ is oriented towards it.
We claim that this orientation of $T$ satisfies the requirements.

First, for any $u\in V(T')$,
only $x_{u\leftarrow}^{T'}$ can possibly be increased by $1$,
so $x_{u\rightarrow}^T+x_{u\leftarrow}^T\leq \ell$.

Now for any vertex $u$ in the path, we have
\begin{itemize}
    \item $(x_{u\rightarrow}^T,x_{u\leftarrow}^T)\le(x_{v\rightarrow}^{T'}+2,x_{v\leftarrow}^{T'}-1)$ if $d(u,v)$ is odd. Note that the only case this is not true is when $x_{v\leftarrow}^{T'}=0$. But then, $0\leq x_{v\rightarrow}^{T'}\leq x_{v\leftarrow}^{T'}=0$, which is impossible, since at least one of them should be positive.
    Thus, $x_{u\rightarrow}^T+x_{u\leftarrow}^T \le \ell$.
    \item $(x_{u\rightarrow}^T,x_{u\leftarrow}^T)\le(x_{v\rightarrow}^{T'},x_{v\leftarrow}^{T'}+1)$ if $d(u,v)$ is even.
    Thus, $x_{u\rightarrow}^T+x_{u\leftarrow}^T \le \ell$.
\end{itemize}
This concludes the induction.
\end{proof}

\subsection{Constructions}
Here we show that the lower bound of \cref{thm:orient} is sharp.
\begin{proposition}\label{prop:odisc:star}
  Let $S_\ell$ be a star with $\ell\ge 2$ leaves.
  Then,
  \[
    \oD(S_\ell,\mathcal{DT}) = \ceil{\frac{\ell}{2}}+1.
  \]
\end{proposition}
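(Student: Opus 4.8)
The plan is to reduce the statement to a finite optimization by classifying the directed rooted subtrees of a star. Write $c$ for the centre of $S_\ell$ and $v_1,\dots,v_\ell$ for its leaves, with $e_i=\{c,v_i\}$. An orientation of $S_\ell$ is determined by which edges point \emph{towards} $c$; say $q$ of the $\ell$ edges point towards $c$ and $p=\ell-q$ point away from it. Since the base orientation is immaterial, it suffices to analyse, for each such adversarial orientation, the maximum over directed rooted subtrees $A\in\mathcal{DT}$ of the imbalance, i.e.\ the number of edges of $A$ on which $A$'s orientation agrees with the adversary's, minus the number on which it disagrees.

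First I would record the key structural fact. Any subtree $A$ of $S_\ell$ is connected, hence contains $c$, and if $A$ is oriented away from a root then every vertex other than the root has in-degree exactly one. Consequently \emph{at most one} edge of $A$ can be directed towards $c$, and such an edge occurs precisely when $A$ is rooted at a leaf. Thus every directed rooted subtree has one of two shapes: rooted at $c$, with all its edges pointing outwards; or rooted at a leaf $v_i$, with $e_i$ pointing towards $c$ and all remaining edges pointing outwards. In particular no directed rooted subtree of a star has two inward edges.

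Next I would compute, for a fixed orientation with parameters $(p,q)$, the largest attainable imbalance in absolute value. For a tree rooted at $c$ the leaf-set is free, so taking all the outward-oriented edges yields imbalance $p$, and taking all inward-oriented edges yields $-q$. For a tree rooted at a leaf $v_i$ the edge $e_i$ is forced inwards while the rest point outwards; choosing $v_i$ among the inward edges together with all outward edges gives imbalance $p+1$ (possible iff $q\ge1$), while choosing $v_i$ outward together with all inward edges gives $-(q+1)$ (possible iff $p\ge1$). Hence the maximal imbalance equals $\max\{p,q\}+1$ when $p,q\ge1$, and equals $\ell$ when $p=0$ or $q=0$.

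Finally I would optimize over the adversary's orientation. Since $p+q=\ell$ we always have $\max\{p,q\}\ge\ceil{\ell/2}$, so every orientation with $p,q\ge1$ forces imbalance at least $\ceil{\ell/2}+1$, while the one-sided orientations force $\ell\ge\ceil{\ell/2}+1$ (using $\ell\ge2$); together these give the lower bound $\oD(S_\ell,\mathcal{DT})\ge\ceil{\ell/2}+1$. The balanced orientation $p=\ceil{\ell/2}$, $q=\floor{\ell/2}$ attains exactly $\ceil{\ell/2}+1$, yielding the matching upper bound. I expect the only delicate point to be the bookkeeping in the structural classification --- confirming that no directed rooted subtree has a second inward edge and that each claimed extreme configuration is genuinely realizable as a valid rooted tree --- after which the optimization over $(p,q)$ is immediate.
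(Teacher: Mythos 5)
Your proof is correct, and for the lower bound it rests on exactly the same extremal configurations as the paper's argument: the substar of all outward-oriented edges rooted at the centre, and the substar of all outward edges plus one inward edge rooted at the corresponding leaf, which the paper finds via a quick pigeonhole case split. The genuine difference is in scope and completeness. The paper's written proof establishes only the inequality $\oD(S_\ell,\mathcal{DT})\ge\ceil{\ell/2}+1$ (every orientation admits a witness substar) and leaves the matching upper bound --- that the balanced orientation attains exactly this value --- implicit. Your classification of the directed rooted subtrees of a star (rooted at the centre with no inward edge, or rooted at a leaf with exactly one inward edge, never two inward edges) gives the exact worst-case imbalance $\max\{p,q\}+1$ for an orientation with $p\ge 1$ outward and $q\ge 1$ inward edges, and $\ell$ in the one-sided cases; both inequalities of the stated equality then follow simultaneously by optimizing over $(p,q)$. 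So your route is slightly longer but yields a self-contained proof of the equality, whereas the paper's pigeonhole phrasing is faster but, as written, covers only one direction.
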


\begin{proof}
  Consider any orientation of $S_\ell$.
  By the pigeonhole principle,
  at least $\ceil{\ell/2}$ edges are oriented in the same direction
  (towards the root or away from the root).
  Assume without loss of generality that at least $\ceil{\ell/2}$ edges are oriented away from the root.
  If at least $\ceil{\ell/2}+1$ edges are oriented away from the root,
  then the substar that consists of these edges and is rooted at the star's root
  has imbalance at least $\ceil{\ell/2}+1$.
  Otherwise, since $\ell>\ceil{\ell/2}$,
  there exists an edge that is oriented from a leaf $u$ towards the root.
  The substar that consists of that edge and all edges that are oriented away from the root,
  and is rooted at $u$,
  has imbalance $\ceil{\ell/2}+1$.
\end{proof}

\section{High dimensional discrepancy}\label{sec:highdim}
We begin by a short proof of \cref{prop:complex}.
\begin{proof}[Proof of \cref{prop:complex}]
  Let $\cH=(\cV,\cE)$ be a hypergraph,
  fix $r\ge 2$,
  and let $D=\D_r(\cH)$.
  Consider an $r$-colouring $g:\cV\to[r]$
  for which $\max_{A\in\cE} \left|r|g^{-1}(A)|-|A|\right|=D$.
  Let $\omega_j=\exp(2j\pi i/n)$, $j=1,...,r$, be the $r$'th roots of unity.
  Define $f:\cV\to\SS^1$ as follows: $f(a)=\omega_{g(a)}$.
  Now, given a hyperedge $A\in\cE$,
  write $A_j=g^{-1}(j)\cap A$ and $\alpha_j=|A_j|$.
  Write further $\beta_j=\alpha_j-|A|/r$.
  Note that
  \[
    f(A) = \sum_{j\in[r]} \sum_{a\in A_j} \omega_j
         = \sum_{j\in[r]} \alpha_j\omega_j
         = \frac{|A|}{r}\sum_{j\in[r]} \omega_j
         + \sum_{j\in[r]} \beta_j\omega_j
         = \sum_{j\in[r]} \beta_j\omega_j. 
  \]
  From the choice of $g$ it follows that $|r\alpha_j-|A||\le D$,
  hence $|\beta_j|\le D/r$.
  Thus, by the triangle inequality, $|f(A)|\le D$.
\end{proof}

We move on to prove \cref{thm:highdim:lb}.

\newcommand{\vc}[1]{\mathbf{#1}}
\begin{lemma}\label{lem:marginal}
  Let $d\ge 1$, and let $X=(X_1,\dots,X_d)\sim \Unif(\SS^{d-1})$. Then, the random variable $X_1$ is distributed on $[-1,1]$ with density function 
  \[
    f_{X_1}(x)=\frac{(1-x^2)^{\frac{d-3}{2}}}{\Beta\left(\frac{d-1}{2},\frac{1}{2}\right)}.
  \]
\end{lemma}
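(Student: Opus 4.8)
The plan is to compute the marginal density of a single coordinate of a uniformly random point on the sphere $\SS^{d-1}\subset\RR^d$ directly, using the classical ``slicing'' description of the uniform measure. The key geometric fact is that the set of points in $\SS^{d-1}$ with first coordinate equal to $x$ is a scaled copy of $\SS^{d-2}$ of radius $\sqrt{1-x^2}$, so the uniform surface measure, when pushed forward to the $X_1$-axis, is proportional to the surface area of that slice times the appropriate arclength correction factor along the generating direction. First I would recall that the density of $X_1$ must be proportional to the $(d-2)$-dimensional surface ``area'' of the slice at height $x$, weighted by the metric factor coming from parametrising the sphere over the first coordinate.

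Concretely, I would write the surface element: parametrising $\SS^{d-1}$ by $(x,\theta)$ where $x=X_1\in[-1,1]$ and $\theta\in\SS^{d-2}$ gives the point $(x,\sqrt{1-x^2}\,\theta)$, and the Jacobian of this map contributes a factor $(1-x^2)^{(d-2)/2}$ from the radius of the slice together with a factor $(1-x^2)^{-1/2}$ from the derivative of $\sqrt{1-x^2}$ (equivalently, $dx$ is not arclength along the meridian). Multiplying these gives surface measure proportional to $(1-x^2)^{(d-3)/2}\,dx\,d\theta$. Integrating out $\theta$ over $\SS^{d-2}$ just contributes a constant, so the density of $X_1$ is proportional to $(1-x^2)^{(d-3)/2}$ on $[-1,1]$.

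It then remains to identify the normalising constant. I would compute $\int_{-1}^{1}(1-x^2)^{(d-3)/2}\,dx$ by the substitution $u=x^2$ (or $x=\cos\phi$), which turns it into a standard Beta integral; recalling $\Beta(z_1,z_2)=\int_0^1 t^{z_1-1}(1-t)^{z_2-1}\,dt$, the substitution $t=x^2$ yields exactly $\Beta\!\left(\tfrac12,\tfrac{d-1}{2}\right)=\Beta\!\left(\tfrac{d-1}{2},\tfrac12\right)$. Dividing by this constant gives the claimed density $f_{X_1}(x)=(1-x^2)^{(d-3)/2}/\Beta\!\left(\tfrac{d-1}{2},\tfrac12\right)$.

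The main technical care is in justifying the Jacobian factor $(1-x^2)^{(d-3)/2}$ rigorously rather than by the heuristic above; the cleanest rigorous route is probably to invoke the coarea formula (or Fubini for the product decomposition of Hausdorff measure on the sphere under the height function $X_1$), which directly yields that the pushforward of normalised surface measure under $x\mapsto X_1$ has density proportional to $(1-x^2)^{(d-3)/2}$, the exponent being $(d-2)/2$ for the slice area minus the $1/2$ from the gradient of the height function on the sphere. Everything else is the elementary Beta-integral normalisation, so I expect the geometric derivation of the exponent to be the only place warranting attention, and I would present it via the coarea formula to keep it clean.
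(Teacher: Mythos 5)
Your proof is correct, and it actually supplies something the paper itself omits: the paper gives no argument for \cref{lem:marginal} at all, deferring entirely to an external citation for this standard fact. Your route is the classical slicing derivation, and every step checks out: parametrising $\SS^{d-1}$ (minus its poles) by $(x,\theta)\in(-1,1)\times\SS^{d-2}\mapsto\bigl(x,\sqrt{1-x^2}\,\theta\bigr)$, the meridian direction $\partial_x = \bigl(1,-x(1-x^2)^{-1/2}\theta\bigr)$ has norm $(1-x^2)^{-1/2}$ and is orthogonal to the slice directions $(0,v)$ with $v\perp\theta$, so the surface element factorises cleanly as $(1-x^2)^{(d-2)/2}\cdot(1-x^2)^{-1/2}\,dx\,d\theta=(1-x^2)^{(d-3)/2}\,dx\,d\theta$; integrating out $\theta$ and normalising via $t=x^2$ gives $\int_{-1}^{1}(1-x^2)^{(d-3)/2}\,dx=\Beta\bigl(\tfrac12,\tfrac{d-1}{2}\bigr)=\Beta\bigl(\tfrac{d-1}{2},\tfrac12\bigr)$, which is the claimed constant. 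Your instinct to make the Jacobian rigorous via the coarea formula is sound, though the orthogonality observation above already suffices and keeps the argument elementary. One small caveat you should state explicitly: the lemma (and your parametrisation, which needs $\SS^{d-2}$) really requires $d\ge 2$; for $d=1$ the distribution of $X_1$ is uniform on $\{-1,1\}$ and has no Lebesgue density, so the statement is vacuously misphrased there. This does not affect the paper, which only invokes the lemma in ambient dimension at least $2$ (it is applied with $d+1$ in place of $d$, for $d\ge 1$, in the proof of \cref{thm:highdim:lb}).
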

A proof of \cref{lem:marginal} can be found in \cite{185312}.

\begin{proof}[Proof of \cref{thm:highdim:lb}]
  Let $T$ be a tree with $\ell$ leaves,
  and let $f:E(T)\to\SS^d$ be a $d$-dimensional colouring its edges.
  Let $L\subseteq E(T)$ be the set of edges in $T$ that are incident to a leaf
  (so $|L|=\ell$).
  For a vector $\vc{v}\in\SS^d$
  let $L_{\vc{v}}$ be the set of edges $e$ in $L$ for which $\vc{v}\cdot f(e)>0$,
  and let $L'_{\vc{v}}=L\sm L_{\vc{v}}$.
  Denote by $T_{\vc{v}}$ the subtree of $T$ with the edge set $E(T)\sm L'_{\vc{v}}$
  and by $T'_{\vc{v}}$ the subtree of $T$ with the edge set $E(T)\sm L_{\vc{v}}$.
  For a subtree $S$ of $T$, write $D(S)=\sum_{e\in E(S)}f(e)$.
  Write $e_1=(1,0,\dots,0)\in\SS^d$ for the first vector in the standard basis.
  Let $\vc{v}$ be a uniformly random sampled vector in $\SS^d$,
  and set $D_{\vc{v}}=\sum_{e\in L}|\vc{v}\cdot f(e)|$.
  By linearity of expectation and by \cref{lem:marginal}
  \[
    \E{D_{\vc{v}}} = \ell\cdot\E|\vc{v}\cdot e_1|
          = 2\ell\cdot \int_{0}^1 \frac{x(1-x^2)^{d/2-1}}{\Beta\left(\frac{d}{2},\frac{1}{2}\right)}dx
          = \frac{2\ell}{d\cdot \Beta\left(\frac{d}{2},\frac{1}{2}\right)}.
  \]
  Thus, there exists a vector $\vc{v}$ for which $D_{\vc{v}} \ge \frac{2\ell}{d\cdot \Beta\left(\frac{d}{2},\frac{1}{2}\right)}$.
  By Cauchy--Schwarz we get
  \[
    \left\|D(T_{\vc{v}})-D(T'_{\vc{v}})\right\|
    \ge \left|\vc{v}\cdot \left(D(T_{\vc{v}})-D(T'_{\vc{v}})\right)\right|
    = D_{\vc{v}}
    \ge \frac{2\ell}{d\cdot \Beta\left(\frac{d}{2},\frac{1}{2}\right)}.
  \]
  By the triangle inequality, 
  $$D(S)\ge \frac{\ell}{d\cdot \Beta\left(\frac{d}{2},\frac{1}{2}\right)}$$
  for some $S\in\{T,T'\}$.
  A straightforward application of Stirling's formula yields the asymptotic bound as $d\rightarrow\infty,$ since
  \[
    \Beta\left(\frac{d}{2},\frac{1}{2}\right)\sim \sqrt{\pi}\cdot \left(\frac{d}{d+1}\right)^{\frac{d}{2}-\frac{1}{2}}
                \cdot \sqrt{\frac{2e}{d+1}}
    \sim \sqrt{\frac{2\pi}{d}}.\qedhere
  \]
\end{proof}

\paragraph{Acknowledgements}
The second author wishes to thank Boris Bukh, Matan Harel and Yinon Spinka 
for fruitful discussions at various stages of this project.

\bibliography{library}

\end{document}